\documentclass{amsart}

\usepackage{amssymb,latexsym,amsmath,extarrows}
\usepackage{graphicx}
\usepackage{xcolor}
\usepackage[symbol]{footmisc}
\usepackage{float}

\newtheorem*{acknowledgement}{Acknowledgements}
\newtheorem{theorem}{Theorem}[section]
\newtheorem{lemma}[theorem]{Lemma}
\newtheorem{proposition}[theorem]{Proposition}
\newtheorem{remark}[theorem]{Remark}

\newtheorem{definition}[theorem]{Definition}
\newtheorem{corollary}[theorem]{Corollary}

\newtheorem{conjecture}[theorem]{Conjecture}
\newtheorem*{intu}{Intuition}

\newcommand{\al}{\alpha}
\newcommand{\be}{\beta}

\newcommand{\e}{\varepsilon}

\newcommand{\la}{\lambda}

\newcommand{\eps}{\epsilon}

%mathcalLIGRAPHIC
\newcommand{\cs}{\mathcal S}

\newcommand{\cb}{\mathcal B}
\newcommand{\cq}{\mathcal Q}

%GENERAL

\newcommand{\wh}{\widehat}

\newcommand{\ZR}{\mathbb{R}}
\newcommand{\ZZ}{\mathbb{Z}}

\newcommand{\ZB}{\mathbb{B}}
\newcommand{\ZS}{\mathbb{S}}

\newcommand{\eit}{e^{i t \Delta}}

\newcommand{\hichi}{\raisebox{0.7ex}{\(\chi\)}}

\begin{document}

\title[$L^2$ estimate of Schr\"odinger maximal function]{Sharp $L^2$ estimate of Schr\"odinger maximal function in higher dimensions}

\author{Xiumin Du}
\address{
University of Maryland\\
College Park, MD}
\email{xdu@math.umd.edu}

\author[R. Zhang]{Ruixiang Zhang}
\address{
University of Wisconsin-Madison\\
Madison, WI}
\email{ruixiang@math.wisc.edu}

\begin{abstract}
We show that, for $n\geq 3$, $\lim_{t \to 0} e^{it\Delta}f(x) = f(x)$ holds almost everywhere for all $f \in H^s (\mathbb{R}^n)$ provided that $s>\frac{n}{2(n+1)}$. Due to a counterexample by Bourgain, up to the endpoint, this result is sharp and fully resolves a problem raised by Carleson. Our main theorem is a fractal $L^2$ restriction estimate,
which also gives improved results on the size of divergence set of Schr\"odinger solutions, the Falconer distance set problem and the spherical average Fourier decay rates of fractal measures. The key ingredients of the proof include multilinear Kakeya estimates, decoupling and induction on scales.
\end{abstract}

\dedicatory{Dedicated to the memory of Jean Bourgain}

\maketitle

\section{Introduction}
The solution to the free Schr\"{o}dinger equation
\begin{equation}
  \begin{cases}
    iu_t - \Delta u = 0, &(x,t)\in \mathbb{R}^n \times \mathbb{R} \\
    u(x,0)=f(x), & x \in \mathbb{R}^n
  \end{cases}
\end{equation}
is given by
$$
  e^{it\Delta}f(x)=(2\pi)^{-n}\int e^{i\left(x\cdot\xi+t|\xi|^2\right)}\widehat{f}(\xi) \, d\xi.
$$

In \cite{lC}, Carleson proposed the problem of identifying the optimal $s$ for which $\lim_{t \to 0}e^{it\Delta}f(x)=f(x)$ almost everywhere whenever
$f\in H^s(\mathbb{R}^n),$ and proved convergence
for $s \geq \frac 1 4$ when $n=1$. Dahlberg and Kenig \cite{DK} then showed that this result is sharp. The higher dimensional case has since been studied by several authors \cite{aC,mC,pS,lV,jB,MVV,TV,sL,jB12,LR17,DG,jB16,LR17',DGL,DGLZ}. In particular, almost everywhere convergence holds if $s>\frac 12-\frac{1}{4n}$ when $n\geq 2$ ($n=2$ due to Lee \cite{sL} and $n\geq 2$ due to Bourgain \cite{jB12}). Recently Bourgain \cite{jB16} gave counterexamples showing that convergence can fail if $s<\frac{n}{2(n+1)}$. Since then, Guth, Li and the first author \cite{DGL} improved the sufficient condition when $n=2$ to the almost sharp range $s>\frac{1}{3}$. In higher dimensions ($n\geq 3$), Guth, Li and the authors \cite{DGLZ} proved the convergence for $s>\frac{n+1}{2(n+2)}$. 

In this article, we establish the following theorem, which is sharp up to the endpoint.

\begin{theorem}\label{thm-pc}
Let $n\geq 3$. For every  $f\in H^s(\mathbb R^n)$ with $s>\frac{n}{2(n+1)}$, $\lim_{t \to 0}e^{it\Delta}f(x)=f(x)$ almost everywhere.
\end{theorem}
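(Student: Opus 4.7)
The plan is to reduce Theorem~\ref{thm-pc} to a fractal $L^2$ restriction-type estimate for the Schr\"odinger extension operator, and then to establish that estimate by combining a refined multilinear Kakeya / $k$-linear Strichartz bound with $\ell^2$-decoupling and induction on scales, pushing the method of \cite{DGL,DGLZ} to its sharp threshold.

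First, by the Nikishin--Stein framework together with the standard Kenig--Ponce--Vega reduction, to prove a.e.\ convergence for every $f\in H^s$ with $s>\frac{n}{2(n+1)}$ it suffices to bound the Schr\"odinger maximal function on arbitrary Frostman sets. Concretely, using Littlewood--Paley and Frostman's lemma, I would reduce Theorem~\ref{thm-pc} to proving the following fractal $L^2$ restriction estimate: for any $\alpha\in(0,n]$, any probability measure $\mu$ on a ball $B_R\subset\mathbb{R}^n$ with $\mu(B(x,r))\lesssim r^\alpha$, and any $f$ with $\widehat f$ supported in the unit ball,
\[
\Bigl\|\sup_{0<t<R}|e^{it\Delta}f(x)|\Bigr\|_{L^2(d\mu)}^2 \;\leq\; C_\epsilon\,R^{\epsilon}\,R^{\alpha-\frac{n}{n+1}}\,\|f\|_{L^2(\mathbb{R}^n)}^2.
\]
Applying this with $\alpha$ slightly larger than $n-2s$ bounds the Hausdorff dimension of the divergence set and implies the theorem; the exponent $\alpha-\frac{n}{n+1}$ is forced by Bourgain's counterexample.

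To prove the fractal estimate I would decompose $e^{it\Delta}f$ into wave packets of spatial width $R^{1/2}$ and split the integration region into a \emph{broad} part, where at each point several essentially transverse wave packets contribute comparably, and a \emph{narrow} part, where the contribution localizes in a single angular cap. For the broad part I would use a $k$-linear refined Strichartz estimate derived from the Bennett--Carbery--Tao multilinear Kakeya inequality, combined with a pigeonholing that counts only packets actually meeting $\mathrm{supp}(\mu)$. For the narrow part I would apply $\ell^2$-decoupling at an intermediate scale, then parabolic rescaling, to reduce to the same fractal estimate at a strictly smaller scale $R'\ll R$ with a rescaled measure still satisfying an $\alpha$-ball condition. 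Iterating this dichotomy gives an induction on the scale $R$ which closes with only an $R^{\epsilon}$ loss.

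The hard part will be calibrating the broad/narrow parameters so that the critical exponent $R^{\alpha-n/(n+1)}$ appears \emph{at every iteration}, without any accumulating power of $R$. A naive application of $\ell^2$-decoupling in the narrow case loses a factor of $R^{1/2}$; to avoid this I would need a \emph{refined} decoupling estimate that sees only the caps contributing at each fractal point, combined with a two-scale wave-packet analysis that transfers the Frostman condition on $\mu$ down through rescalings. Matching the broad and narrow exponents exactly at the critical scaling is what ultimately allows one to reach the sharp Sobolev threshold $s>\frac{n}{2(n+1)}$, as opposed to the previous $\frac{n+1}{2(n+2)}$ of \cite{DGLZ}.
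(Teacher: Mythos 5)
Your proposal follows essentially the same route as the paper: reduce the a.e.\ convergence statement to a fractal $L^2$ restriction estimate, and prove that estimate by a broad--narrow dichotomy in which the broad part is controlled by a multilinear refined Strichartz bound (built on Bennett--Carbery--Tao multilinear Kakeya) and the narrow part by $\ell^2$-decoupling together with parabolic rescaling and induction on scales, exactly in the spirit of \cite{DGL,DGLZ}. Two small caveats worth noting: the paper phrases the core estimate (Theorem~\ref{thm-L2X}/Corollary~\ref{cor-L2X}) for unions of lattice unit cubes in spacetime $\mathbb{R}^{n+1}$ and handles the narrow case by \emph{ordinary} lower-dimensional $\ell^2$-decoupling followed by a multi-parameter dyadic pigeonholing over tubes at two scales, rather than by a ``refined decoupling''; and the exponent in your target fractal estimate, $R^{\alpha - \frac{n}{n+1}}$ for a probability measure on $B_R$, does not scale correctly (for $\alpha = n$ the squared bound should be $R^{n/(n+1)}$ for Lebesgue measure on $B^n(0,R)$, as in Theorem~\ref{thm-L2}), so the normalization would need to be fixed before the induction could close.
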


We use $B^m(x, r)$ to represent a ball centered at $x$ with radius $r$ in $\ZR^m$. By a standard smooth approximation argument, Theorem \ref{thm-pc} is a consequence of the following estimate of the Schr\"odinger maximal function:

\begin{theorem} \label{thm-L2-0} 
Let $n\geq 3$. For any $s > \frac{n}{2(n+1)}$, the following bound holds: for any function $f \in H^s(\mathbb{R}^n)$,
\begin{equation} \label{eq-L2-0}
\left\| \sup_{0 < t \le 1} | e^{it \Delta} f| \right\|_{L^2(B^n(0,1))}  \le C_s \| f \|_{H^s(\mathbb{R}^n)}.
\end{equation}
\end{theorem}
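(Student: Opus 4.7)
The plan is to reduce \eqref{eq-L2-0} to a \emph{fractal $L^{2}$ restriction estimate} for the paraboloid (the main theorem announced in the abstract), and then to prove that restriction estimate by a Bourgain--Guth style induction on scales, combining multilinear Kakeya in the broad case with Bourgain--Demeter $\ell^{2}$ decoupling in the narrow case.

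\emph{Step 1: Reduction.} A standard Littlewood--Paley decomposition, frequency localization to an annulus $\{|\xi|\sim R\}$, and parabolic rescaling $(x,t)\mapsto(Rx,R^{2}t)$ reduce \eqref{eq-L2-0} to an estimate of the form
\[
\bigl\|\sup_{0<t\leq R}|e^{it\Delta}f|\bigr\|_{L^{2}(B^{n}(0,R^{1/2}))}\leq C_{\varepsilon}\,R^{\frac{n}{4(n+1)}+\varepsilon}\,\|f\|_{2},
\]
for every $R\geq1$ and every $f$ with $\widehat{f}$ supported in $B^{n}(0,1)$. Linearizing the supremum by a measurable time function $t(x)$ and pigeonholing over dyadic level sets of $|e^{it(x)\Delta}f|$ and over the multi-scale densities of the graph $\{(x,t(x))\}$, one further reduces to a \textbf{fractal $L^{2}$ restriction estimate} of the form
\[
\|e^{it\Delta}f\|_{L^{2}(Y)}^{2}\leq C_{\varepsilon,\alpha}\,R^{\varepsilon}\,R^{\beta(n,\alpha)}\,\|f\|_{2}^{2}
\]
for every $\alpha\in(0,n+1]$ and every union $Y$ of lattice unit cubes in $B^{n+1}(0,R)$ satisfying the non-concentration bound $\#\{\text{unit cubes of }Y\cap B^{n+1}(x,r)\}\leq r^{\alpha}$ for all $x\in\mathbb{R}^{n+1}$ and all $r\in[1,R]$. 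The exponent $\beta(n,\alpha)$ is calibrated so that summing the dyadic contributions from the pigeonholing yields the target $\tfrac{n}{2(n+1)}$; in particular the sharp exponent is attained when $Y$ sits on the $n$-dimensional graph $\{(x,t(x))\}$.

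\emph{Step 2: Induction on scales for the fractal estimate.} Fix an intermediate scale $1\ll K\ll R^{1/2}$, decompose the unit frequency ball into $K^{-1}$-caps $\theta$ so that $f=\sum_{\theta}f_{\theta}$, and partition $B^{n+1}(0,R)$ into parabolically scaled $K$-boxes. On each such box, run the Bourgain--Guth broad/narrow dichotomy. In the broad case, contributions come from $n+1$ quantitatively transverse caps $\theta_{1},\dots,\theta_{n+1}$, to which the multilinear Kakeya inequality of Bennett--Carbery--Tao and Guth applies; the $\alpha$-dimensional structure of $Y$ enters crucially because transverse wave packets cannot pack efficiently into a lower-dimensional set. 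In the narrow case, the contributions concentrate in caps clustered inside a $K^{-1/2}$-neighborhood of a codimension-one section of the paraboloid; Bourgain--Demeter $\ell^{2}$ decoupling at scale $K$ transfers the norm to each cap, and after parabolic rescaling of each cap back to the unit frequency ball one invokes the inductive hypothesis at the smaller spatial scale $R/K^{2}$ with an inherited smaller dimensional parameter $\alpha'$. Balancing the broad and narrow bounds and iterating through $O_{\varepsilon}(\log R)$ scales produces a closed inductive inequality whose fixed point is $\beta(n,\alpha)$.

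\emph{Main obstacle.} The principal technical difficulty is propagating the \emph{multi-scale} fractal non-concentration hypothesis on $Y$ through the induction without loss. Each narrow-case rescaling distorts the dimensional profile, and each broad-case spatial shadow of $Y$ onto the $K^{2}$-cubes inherits a new, scale-dependent non-concentration bound. A naive wave-packet decomposition loses a power of $K$ per step that stalls the induction at the weaker exponent $\frac{n+1}{2(n+2)}$ of \cite{DGLZ}. Closing the induction at the sharp exponent will require a carefully engineered two-parameter induction (on both the spatial scale $R$ and the dimensional parameter $\alpha$), together with refined wave-packet pigeonholing that simultaneously preserves the $\alpha$-dimensional bound on the broad-case spatial shadows and on the parabolically rescaled narrow caps. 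Designing and closing this two-parameter induction is the technical heart of the argument.
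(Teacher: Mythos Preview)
Your overall strategy is the paper's: reduce to a fractal $L^2$ restriction estimate, prove that by a broad--narrow analysis in which the broad part is controlled by a multilinear estimate and the narrow part by lower-dimensional $\ell^2$ decoupling plus parabolic rescaling and induction on the radius. Your reduction in Step~1 is correct (your normalization $B(0,R^{1/2})$, exponent $\tfrac{n}{4(n+1)}$ is equivalent to the paper's $B(0,R)$, exponent $\tfrac{n}{2(n+1)}$). In Step~2, note two minor slips: the dichotomy is run on $K^2$-cubes, not ``$K$-boxes''; and in the narrow case the significant $K^{-1}$-caps have Gauss images within $O(1/K)$ of a hyperplane, not a $K^{-1/2}$-neighborhood.

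The genuine gap is your proposed mechanism for closing the induction. You conjecture a \emph{two-parameter induction on $(R,\alpha)$} in which each narrow rescaling passes to ``an inherited smaller dimensional parameter $\alpha'$''. This is not what happens, and I do not see how to make it work: parabolic rescaling of a narrow box $\Box$ does not lower the fractal dimension of $Y\cap\Box$; it changes the \emph{density constant}. The paper keeps $\alpha$ \emph{fixed} throughout and instead formulates the fractal estimate with an explicit density parameter $\gamma$ (plus auxiliary parameters $M$, $\lambda$ counting $K^2$-cubes and their occupancy in $R^{1/2}$-cubes). The entire induction is then on $R$ alone. After dyadic pigeonholing inside each $\Box$ one obtains new parameters $M_1,\lambda_1,\gamma_1$ at scale $R_1=R/K^2$ with the \emph{same} $\alpha$, and the proof closes because of three elementary counting relations, roughly
\[
\frac{\mu}{\#\{\Box\}}\lesssim \frac{M_1\eta}{M},\qquad \gamma_1\,\eta\lesssim K^{\alpha+1}\gamma,\qquad \lambda_1\lesssim \frac{K\lambda}{\eta},
\]
where $\eta$ is the (pigeonholed) number of $K^2$-cubes of $Y$ in each thin tube of $\Box$ and $\mu$ is the multiplicity of boxes through a cube. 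The common parameter $\eta$ cancels when these are substituted back, yielding a clean gain $K^{-2\varepsilon}$ that beats the decoupling loss. This bookkeeping---introducing $\gamma$ (and $\lambda,M$) into the inductive statement and tracking how they transform under rescaling, rather than trying to vary $\alpha$---is exactly the ``carefully engineered'' device you are looking for, and without it the narrow case does stall at the $\tfrac{n+1}{2(n+2)}$ threshold you mention.
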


Via a localization argument, Littlewood-Paley decomposition and parabolic rescaling, Theorem \ref{thm-L2-0} is reduced to the following theorem which we will prove in this paper:

\begin{theorem}\label{thm-L2}
Let $n\geq 3$. For any $\e >0$, there exists a constant $C_\e$ such that
\begin{equation}\label{eq-L2}
 \left\|\underset{0<t\leq R}\sup|e^{it\Delta}f|\right\|_{L^2(B^n(0,R))} \leq
C_\e R^{\frac{n}{2(n+1)}+\e} \|f\|_2
\end{equation}
 holds for all $R\geq 1$ and all $f$ with ${\rm supp}\widehat{f}\subset A(1)=\{\xi\in \ZR^n:|\xi|\sim 1\}$.
\end{theorem}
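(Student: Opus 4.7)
The plan is to follow the broad/narrow induction-on-scales framework of \cite{DGL,DGLZ}, replacing the multilinear Kakeya input by a fractal-refined multilinear restriction estimate that produces the sharp exponent.

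First I would linearize the supremum: replace $\sup_{0<t\leq R}|e^{it\Delta}f(x)|$ by $|e^{it(x)\Delta}f(x)|$ for a measurable choice $t\colon B^n(0,R)\to(0,R]$. Since $\widehat f$ is supported in $\{|\xi|\sim 1\}$, the function $(x,t)\mapsto e^{it\Delta}f(x)$ has unit frequency in $(x,t)$ and is essentially constant on lattice cubes of side $R^{1/2}$ in $\ZR^{n+1}$. Covering the graph of $t(\cdot)$ by such cubes $\{B_k\}\subset B^{n+1}(0,R)$ and setting $X=\bigcup_k B_k$, a dyadic pigeonholing on (i) the number of selected cubes, (ii) the size of $|e^{it\Delta}f|$ on each cube, and (iii) the local density of $X$ at every dyadic scale $r\in[R^{1/2},R]$ lets me assume, at the cost of an $R^\e$ factor, that $X$ is uniformly $\alpha$-fractal for some $\alpha\in[0,n+1]$: every ball of radius $r$ contains at most $(r/R^{1/2})^\alpha$ of the cubes $B_k$. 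The theorem then reduces to a \emph{fractal $L^2$ restriction estimate}
\begin{equation*}
\|e^{it\Delta}f\|_{L^2(X)}^2 \leq C_\e R^{\alpha/(n+1)+\e}\|f\|_2^2,
\end{equation*}
since the worst case $\alpha=n$ gives $R^{n/(n+1)+\e}\|f\|_2^2$, which is exactly the square of the target bound in \eqref{eq-L2}.

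Next, I would prove the fractal $L^2$ estimate by induction on $R$. Fix $K=R^\delta$ with $\delta\ll\e$ and decompose $f=\sum_\tau f_\tau$ over $K^{-1}$-caps $\tau\subset A(1)$. On each $K^2$-subcube $Q\subset B^{n+1}(0,R)$, I split into a broad part, where $\|e^{it\Delta}f\|_{L^2(Q)}$ is comparable to the contribution of $n$ quantitatively transverse caps $f_{\tau_1},\dots,f_{\tau_n}$, and a narrow part, concentrated on wave packets whose frequencies lie in an $O(K^{-1})$-neighborhood of a hyperplane. For the narrow part, Bourgain--Demeter $\ell^2$-decoupling on the paraboloid bounds the contribution by the $\ell^2$-sum over caps; parabolic rescaling then turns each $f_\tau$ into an input for the fractal estimate at scale $R/K^2$, handled by the inductive hypothesis. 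For the broad part, I apply a fractal multilinear restriction estimate yielding the correct exponent $\alpha/(n+1)$. Balancing the two contributions and taking $\delta$ sufficiently small closes the induction with exponent $\alpha/(n+1)+O(\e)$.

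The main obstacle, and the technical heart of the argument, will be the broad multilinear fractal restriction estimate with exponent $\alpha/(n+1)$. The plain Bennett--Carbery--Tao multilinear Kakeya inequality only yields the weaker exponent $\alpha/(n+2)$, which was the source of the suboptimal threshold $\frac{n+1}{2(n+2)}$ in \cite{DGLZ}. To improve to $\alpha/(n+1)$ I would iterate the broad inequality itself: at an intermediate scale $K^2\ll\rho\ll R$, apply a further decoupling to split the broad term into pieces adapted to $\rho^{-1/2}$-caps, invoke the inductive hypothesis at scale $\rho$ for each piece, and recombine via orthogonality. Consistently propagating the fractal parameter $\alpha$ through this nested induction, and arranging the scales so that the $R^{C\e}$ losses do not compound over the many levels of recursion, is the delicate bookkeeping that must be handled carefully.
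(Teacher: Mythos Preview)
Your reduction to a fractal $L^2$ estimate on a union $X$ of cubes is correct in spirit and matches the paper's Corollary \ref{cor-L2X}, but you have the difficulty located in the wrong place, and your proposed fix for the broad case is both unnecessary and unclear.

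The broad case is \emph{not} the obstacle. With $n+1$ transverse caps (not $n$ --- you are in $\ZR^{n+1}$), the Bennett--Carbery--Tao multilinear restriction estimate already gives
\[
\Big\|\prod_{j=1}^{n+1}|e^{it\Delta}f_{\tau_j}|^{1/(n+1)}\Big\|_{L^{2(n+1)/n}(B^{n+1}(0,R))}\lessapprox \|f\|_2,
\]
and H\"older on a set $X$ of $M\le \gamma R^{\alpha}$ unit cubes yields $\|\cdot\|_{L^2(X)}\lesssim M^{1/(2(n+1))}R^{\e}\|f\|_2\lesssim \gamma^{1/(2(n+1))}R^{\alpha/(2(n+1))+\e}\|f\|_2$, which is already the sharp broad bound. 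Bourgain \cite{jB12} had this step; the exponent $\alpha/(n+2)$ you attribute to multilinear Kakeya is not what one gets here, and the iterated intermediate-scale scheme you outline is not needed.

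The real obstruction is the \emph{narrow} case, which you dismiss in one sentence. Applying $(n{+}1)$-dimensional decoupling and then rescaling is exactly the approach of \cite{BG,jB12}, and it loses a power of $K$ that the induction cannot absorb; this was the source of the non-sharp narrow estimate historically. The paper's new idea is that in the narrow situation the significant caps cluster near a hyperplane, so one may invoke $n$-dimensional decoupling at the better exponent $p=\frac{2(n+1)}{n-1}$ (Lemma \ref{lem-nr-dec}). One must then track, through parabolic rescaling, how the fractal parameters $M,\gamma,\lambda$ transform into $M_1,\gamma_1,\lambda_1$ on each box $\Box$, introduce an auxiliary density $\eta$ counting $K^2$-cubes per rescaled tube, and verify via the combinatorial relations $\mu/\#\ZB\lesssim M_1\eta/M$, $\eta\gamma_1\lesssim \gamma K^{\alpha+1}$, $\lambda_1\lesssim K\lambda/\eta$ that the gain from dropping a dimension in decoupling exactly closes the induction. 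This bookkeeping is the technical heart of the paper and is entirely absent from your plan.

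Two smaller errors: $e^{it\Delta}f$ with $\mathrm{supp}\,\widehat f\subset B^n(0,1)$ is locally constant at scale $1$, not $R^{1/2}$, so the $B_k$ should be unit cubes; and your fractal pigeonholing should produce the upper bound $\#\{B_k\subset B(x',r)\}\le \gamma r^{\alpha}$ for $r\ge 1$, with the graph-of-$t(\cdot)$ structure forcing $\alpha=n$, $\gamma\lesssim 1$.
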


\begin{remark}
When $n=1,2$, our proof of Theorem \ref{thm-L2} remains valid and recovers the almost sharp results of the pointwise convergence problem. However, the sharp $L^2$ estimates of the Schr\"odinger maximal function are not as strong as the previous sharp $L^p$ estimates in the cases $n=1,2$:
\begin{equation}\label{L4}
\left\| \sup_{t>0} | e^{it \Delta} f| \right\|_{L^4(\ZR)}  \le C \| f \|_{H^{1/4}(\mathbb{R})}\,, \quad \text{\cite[Kenig-Ponce-Vega]{KPV}}\,,
\end{equation}
and
\begin{equation}\label{L3}
\left\| \sup_{0 < t \le 1} | e^{it \Delta} f| \right\|_{L^3(\ZR^2)}  \le C_s \| f \|_{H^{s}(\mathbb{R}^2)}\,, \forall s>\frac 13, \quad \text{\cite[D.-Guth-Li]{DGL}} \footnote[1]{
The global $L^3$ estimate \eqref{L3} follows easily from the local $L^3$ estimate in \cite{DGL}, via a localization argument using wave packet decomposition.} \,.
\end{equation}
Testing with the standard examples used in restriction theory seems to
suggest that the following estimate holds for all $n\geq 1$:
\begin{equation}\label{Lp}
\left\| \sup_{0 < t \le 1} | e^{it \Delta} f| \right\|_{L^{\frac{2(n+1)}{n}}(\ZR^n)}  \le C \| f \|_{H^{\frac{n}{2(n+1)}}(\mathbb{R}^n)}\,.
\end{equation}

From \eqref{L4} and \eqref{L3} we see that \eqref{Lp} is true for $n=1$, and is true up to the endpoint for $n=2$. However, the estimate \eqref{Lp} fails in higher dimensions. In a recent work of Kim, Wang and the authors \cite{DKWZ}, by looking at Bourgain's counterexample \cite{jB16} in every intermediate dimension, we showed that the following local estimate
\begin{equation}\label{Lp-loc}
\left\| \sup_{0 < t \le 1} | e^{it \Delta} f| \right\|_{L^p(B^n(0,1))}  \le C_s \| f \|_{H^{s}(\mathbb{R}^n)}\,, \forall s>\frac {n}{2(n+1)}
\end{equation}
fails if $p> p_0:= 2+\frac{4}{(n-1)(n+2)}$.
Note that $\frac{2(n+1)}{n} > p_0$ when $n\geq 3$ and henceforth \eqref{Lp} fails. To our best knowledge, the following two problems are still open when $n\geq 3$: determine the optimal $p=p(n)$ for which we can have (\ref{Lp-loc}) and identify the optimal $s=s(n,p)$ for which \eqref{Lp-loc} with $p>2$ fixed holds.
\end{remark}

\begin{remark}
In our proof of \eqref{eq-L2}, no typical $L^2$ arguments such as Plancherel and $TT^*$ are invoked to take advantage of the particular use of the $L^2$ norm on the left hand side of \eqref{eq-L2}. In fact, the $L^2$ norm  will be converted to $L^p$ norm (see Proposition \ref{thm-main}), where $p=\frac{2(n+1)}{n-1}$ is the sharp exponent for the $l^2$ decoupling theorem in dimension $n$. The $L^2$ is used on the left hand side of \eqref{eq-L2} mostly because the numerology adds up favorably for that space.
\end{remark}

By lattice $L$-cubes we mean cubes of the form $l+[0,L]^n$ with $l\in (L\ZZ)^n$. Our main result is the following fractal $L^2$ restriction estimate, from which Theorem \ref{thm-L2} follows. 

\begin{theorem} \label{thm-L2X}
Let $n\geq 1$.
For any $\e>0$, there exists a constant\ $C_\e$ such that the following holds for all $R\geq 1$ and all $f$ with ${\rm supp}\widehat{f}\subset B^n(0,1)$. 
Suppose that $X=\bigcup_k B_k$ is a union of lattice unit cubes in $B^{n+1}(0,R)$ and each lattice $R^{1/2}$-cube intersecting $X$ contains $\sim \lambda$ many unit cubes in $X$. Let $1\leq \alpha\leq n+1$ and $\gamma$ be given by
\begin{equation}\label{ga-L2}
\gamma:=\max_{ \underset{x'\in \ZR^{n+1},r\geq 1}{B^{n+1}(x',r)\subset B^{n+1}(0,R)}} \frac{\#\{B_k : B_k \subset B(x',r)\}}{r^\alpha}\,.
\end{equation}
Then
\begin{equation} \label{eq-L2X}
\|\eit f\|_{L^2(X)} \leq C_\e \gamma^{\frac{2}{(n+1)(n+2)}} \lambda^{\frac{n}{(n+1)(n+2)}} R^{\frac{\alpha}{(n+1)(n+2)}+\e}\|f\|_2\,.
\end{equation}
\end{theorem}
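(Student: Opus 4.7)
The proof strategy is an induction on the radius $R$, combining a broad--narrow analysis at an intermediate scale $K=R^\delta$ (with $\delta\ll\e$) with an $\ell^2$-decoupling at scale $R^{1/2}$ on the truncated paraboloid. The sharp decoupling exponent $p=\frac{2(n+1)}{n-1}$ emerges after trading the $L^2$-norm on $X$ for a fractal $L^p$-norm at the $R^{1/2}$ scale, which is also where the density parameter $\lambda$ enters the final bound.

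The first main step is to convert \eqref{eq-L2X} into a fractal $L^p$-restriction estimate on the union $Y$ of those lattice $R^{1/2}$-cubes that meet $X$. Using the locally-constant property of $e^{it\Delta}f$ on unit cubes, H\"older's inequality on each $R^{1/2}$-cube (which by hypothesis contains $\sim\lambda$ unit cubes of $X$), and an $\ell^2$-decoupling of $f$ into $R^{-1/2}$-caps $\theta$, one rewrites the left side of \eqref{eq-L2X} in terms of an $L^p$-norm of the wave-packet pieces $e^{it\Delta}f_\theta$ on $Y$, with an explicit $\lambda$-dependent prefactor. Counting incidences between wave-packet tubes (of dimensions $R^{1/2}\times\cdots\times R^{1/2}\times R$) and $R^{1/2}$-cubes of $Y$, using both the $\lambda$- and $\gamma$-hypotheses, is ultimately what produces the factors $\lambda^{n/((n+1)(n+2))}$ and $\gamma^{2/((n+1)(n+2))}$.

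For the $L^p$-estimate itself, apply a broad--narrow dichotomy at scale $K$. Cover $B^n(0,1)$ by caps $\tau$ of radius $K^{-1}$ and write $f=\sum_\tau f_\tau$. In the narrow case a single $\tau$ dominates, and parabolic rescaling of $\tau$ reduces the problem to the same estimate at radius $R/K^2$ on a rescaled set $\widetilde X$ with rescaled density parameters, so the inductive hypothesis closes this case with a favorable $K$-loss because $K=R^\delta$ is tiny. In the broad case, pick $n$ quantitatively transverse caps $\tau_1,\ldots,\tau_n$ and estimate the geometric mean $\prod_{j=1}^n|e^{it\Delta}f_{\tau_j}|^{1/n}$. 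Here one invokes the multilinear Kakeya/restriction theorem of Bennett--Carbery--Tao (in Guth's polynomial form) applied to the $n$ transverse wave-packet families obtained from the $R^{-1/2}$-decoupling, and bounds the resulting tube incidences on $Y$ using the two-scale fractal structure controlled by $\lambda$ and $\gamma$.

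The hardest step is the combined use of decoupling and multilinear Kakeya in the broad case, together with the bookkeeping that balances the three exponents in \eqref{eq-L2X}. The key improvement over the DGLZ approach is to treat the $R^{1/2}$-scale density $\lambda$ as an independent parameter from the all-scale density $\gamma$: $\lambda$ captures the local wave-packet concentration at the decoupling scale, while $\gamma$ captures the global fractal dimension through \eqref{ga-L2}. Matching the two contributions produces the sharp exponent of $R$, and the induction closes provided $K=R^\delta$ is chosen with $\delta$ sufficiently small, so that the $K^{O(1)}$ losses from the broad--narrow step are absorbed by the gain coming from the inductive hypothesis applied at radius $R/K^2$.
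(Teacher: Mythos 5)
Your high-level architecture (broad--narrow at scale $K=R^\delta$, parabolic rescaling in the narrow case, induction on $R$, converting $L^2$ to $L^p$ with $p=\frac{2(n+1)}{n-1}$) is the right one, but the broad case as you describe it is too weak to prove Theorem~\ref{thm-L2X} and would at best yield Corollary~\ref{cor-L2X}. In the broad case you propose to invoke the Bennett--Carbery--Tao multilinear Kakeya/restriction theorem directly. That gives a bound depending only on the global fractal parameter $\gamma$, not on the separate density $\lambda$ at the $R^{1/2}$-scale. To extract the factor $\lambda^{\frac{n}{(n+1)(n+2)}}$ the paper replaces plain multilinear restriction with the \emph{multilinear refined Strichartz estimate} of Du--Guth--Li--Zhang (Theorem~\ref{multstr}, at the distinct exponent $q=\frac{2(n+2)}{n}$), applied after pigeonholing into $R^{1/2}$-cubes so that the cube count $N\gtrsim K^{-C}M/\lambda$ enters. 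That ingredient is itself a decoupling-plus-multilinear-Kakeya result and cannot be replaced by a single application of BCT; the paper explicitly notes this distinction when comparing the proofs of Theorem~\ref{thm-L2X} and the weaker Corollary~\ref{cor-L2X}.

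Two further issues. First, you take the building blocks of $Y$ to be $R^{1/2}$-cubes, but the inductive set in Proposition~\ref{thm-main} is a union of $K^2$-cubes (with $R^{1/2}$-cubes appearing only as the coarser scale where $\lambda$ is measured). Aggregating to $R^{1/2}$-cubes before decoupling forgets the two-scale ($K^2$ versus $R^{1/2}$) structure that the induction needs: after parabolic rescaling the $K^2$-cubes become $K_1^2$-cubes and the $R^{1/2}\times\cdots\times R^{1/2}\times KR^{1/2}$-tubes become $R_1^{1/2}$-cubes, so both scales must be tracked and related through \eqref{muB}, \eqref{eta}, \eqref{la-la1}. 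Relatedly, the decoupling used on a narrow $K^2$-cube is into $K^{-1}$-caps (Lemma~\ref{lem-nr-dec}, an $n$-dimensional decoupling), not into $R^{-1/2}$-caps; decoupling all the way to $R^{-1/2}$ at this stage would not allow the induction on scales to close. Second, in $\ZR^{n+1}$ the broad case requires $n+1$ quantitatively transverse caps $\tau_1,\dots,\tau_{n+1}$ (and then the $(n+1)$-linear estimate); your $n$ transverse caps is off by one and would not exhaust transversality in the $(n+1)$-dimensional space-time.
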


Note that in Theorem \ref{thm-L2X}, $\la\leq \gamma R^{\alpha/2}$.
As a direct result of Theorem \ref{thm-L2X},
there holds a slightly weaker fractal $L^2$ restriction estimate. It has a relatively simpler statement:

\begin{corollary}
\label{cor-L2X}
Let $n\geq 1$. For any $\e>0$, there exists a constant $C_\e$ such that the following holds for all $R\geq 1$ and all $f$ with ${\rm supp}\widehat{f}\subset B^n(0,1)$. Suppose that $X=\bigcup_{k} B_k$
is a union of lattice unit cubes in $B^{n+1}(0,R)$.
Let $1\leq \al \leq n+1$ and $\gamma$ be given by
\begin{equation}\label{ga-L2'}
\gamma:=\max_{ \underset{x'\in \ZR^{n+1},r\geq 1}{B^{n+1}(x',r)\subset B^{n+1}(0,R)}} \frac{\#\{B_k : B_k \subset B(x',r)\}}{r^\alpha}\,.
\end{equation}
Then
\begin{equation} \label{eq-L2X'}
\|\eit f\|_{L^2(X)} \leq C_\e \gamma^{\frac{1}{n+1}} R^{\frac{\alpha}{2(n+1)}+\e}\|f\|_2\,.
\end{equation}
\end{corollary}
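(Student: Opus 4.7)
The plan is to derive Corollary \ref{cor-L2X} from Theorem \ref{thm-L2X} by dyadic pigeonholing on the density of $X$ inside $R^{1/2}$-cubes, combined with the trivial bound $\lambda \leq \gamma R^{\alpha/2}$ that is pointed out immediately after the statement of Theorem \ref{thm-L2X}.

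First I would sort the unit cubes of $X$ by the lattice $R^{1/2}$-cube that contains them. For each such lattice $R^{1/2}$-cube $Q$, let $\lambda_Q := \#\{k : B_k \subset Q\}$; note $1 \leq \lambda_Q \leq R^{(n+1)/2}$. Dyadically split the collection of $R^{1/2}$-cubes meeting $X$ into $O(\log R)$ families $\mathcal{F}_j$ on which $\lambda_Q \sim 2^j$, and define $X_j$ to be the union of the unit cubes of $X$ that lie in some $Q \in \mathcal{F}_j$. Then $X = \bigsqcup_j X_j$ as a union of lattice unit cubes, each lattice $R^{1/2}$-cube meeting $X_j$ contains $\sim 2^j$ unit cubes of $X_j$, and since $X_j \subset X$ the density bound in \eqref{ga-L2'} for $X_j$ is at most the $\gamma$ for $X$. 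Hence $X_j$ satisfies the hypotheses of Theorem \ref{thm-L2X} with $\lambda = 2^j$ and with the same $\alpha$ and $\gamma$.

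Applying Theorem \ref{thm-L2X} to each $X_j$ gives
\begin{equation*}
\|e^{it\Delta}f\|_{L^2(X_j)} \leq C_\e \gamma^{\frac{2}{(n+1)(n+2)}} (2^j)^{\frac{n}{(n+1)(n+2)}} R^{\frac{\alpha}{(n+1)(n+2)}+\e}\|f\|_2.
\end{equation*}
Applying the definition \eqref{ga-L2'} of $\gamma$ to an arbitrary $R^{1/2}$-ball yields $2^j \lesssim \gamma R^{\alpha/2}$, so the right-hand side is bounded by
\begin{equation*}
C_\e \gamma^{\frac{2+n}{(n+1)(n+2)}} R^{\frac{\alpha n/2 + \alpha}{(n+1)(n+2)}+\e}\|f\|_2 = C_\e \gamma^{\frac{1}{n+1}} R^{\frac{\alpha}{2(n+1)}+\e}\|f\|_2,
\end{equation*}
using $\frac{\alpha(n+2)/2}{(n+1)(n+2)} = \frac{\alpha}{2(n+1)}$.

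Finally, since the $X_j$ are disjoint, $\|e^{it\Delta}f\|_{L^2(X)}^2 = \sum_j \|e^{it\Delta}f\|_{L^2(X_j)}^2$, and summing over the $O(\log R)$ dyadic values of $j$ produces a $\log R$ loss that is absorbed into $R^{\e}$ (after replacing $\e$ by $\e/2$ in the application of Theorem \ref{thm-L2X}). This yields \eqref{eq-L2X'}. There is no real obstacle here beyond verifying that the dyadic pieces $X_j$ inherit the density bound $\gamma$ and do satisfy the uniform $\lambda$-condition of Theorem \ref{thm-L2X}; the exponent arithmetic is an easy check.
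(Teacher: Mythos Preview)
Your proof is correct and follows essentially the same approach as the paper's: both perform a dyadic pigeonholing on the number of unit cubes of $X$ contained in each lattice $R^{1/2}$-cube, apply Theorem~\ref{thm-L2X} to the resulting uniform-$\lambda$ subcollection, and finish with the observation $\lambda \lesssim \gamma R^{\alpha/2}$. The only cosmetic difference is that the paper pigeonholes to a single dominant dyadic value of $\lambda$, whereas you sum the $L^2$ norms over all $O(\log R)$ dyadic levels; these are equivalent standard formulations and the exponent arithmetic is identical.
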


We will see that Corollary \ref{cor-L2X} is sufficient to derive the sharp $L^2$ estimate of Schr\"odinger maximal function (Theorem \ref{thm-L2}) and all other applications in Section \ref{sec-app}. This corollary can also be proved directly by a slightly simpler argument. The case $n=1$ of Corollary \ref{cor-L2X} can be recovered using the ingredients in Wolff's paper \cite{W}. 
See Subsection \ref{sec-rmk} for a discussion.

Nevertheless, Theorem \ref{thm-L2X} has two advantages compared to Corollary \ref{cor-L2X}. Firstly, it gives us a better $L^2$ restriction estimate if the set $X$ of unit cubes is fairly sparse at the scale $R^{1/2}$. Secondly, it tells us some geometric information about a set $X$ of unit cubes when $\|\eit f\|_{L^2(X)}$ is comparable to $\|\eit f\|_{L^2(B(0,R))}$. For example, taking $\alpha = n+1$ (hence $\gamma \lesssim 1$) we have:

\begin{corollary}\label{cor-fillin}
Let $n\geq 1$. Suppose that $X=\bigcup_{k} B_k$ is a union of lattice unit cubes in $B^{n+1}(0,R)$ and each lattice $R^{1/2}$-cube intersecting $X$ contains $\sim \lambda$ many unit cubes in $X$. Suppose there is a function $f$ with ${\rm supp}\wh f \subset B^n(0,1)$ and $\|f\|_2 \neq 0$ such that
$\|\eit f\|_{L^2(X)} \gtrsim R^{1/2}\|f\|_2$. Then $\lambda \gtrapprox R^{\frac{n+1}{2}}$.
\end{corollary}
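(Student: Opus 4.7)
The plan is to apply Theorem \ref{thm-L2X} directly with the choice $\alpha = n+1$ and then solve the resulting inequality for $\lambda$.

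First I would observe that when $\alpha = n+1$, the quantity $\gamma$ defined in \eqref{ga-L2} is bounded by an absolute constant: for any ball $B^{n+1}(x',r)$, a volume packing argument gives $\#\{B_k : B_k \subset B(x',r)\} \lesssim r^{n+1}$ since the $B_k$ are disjoint lattice unit cubes. Hence $\gamma \lesssim 1$, and we do not need any finer information about the distribution of $X$ at intermediate scales.

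Next, I would feed the hypothesis on $X$ (each lattice $R^{1/2}$-cube meeting $X$ contains $\sim \lambda$ unit cubes) into Theorem \ref{thm-L2X}. The conclusion \eqref{eq-L2X} with $\alpha = n+1$ and $\gamma \lesssim 1$ becomes
\begin{equation*}
\|e^{it\Delta} f\|_{L^2(X)} \leq C_\e \, \lambda^{\frac{n}{(n+1)(n+2)}}\, R^{\frac{n+1}{(n+1)(n+2)}+\e} \|f\|_2 = C_\e\, \lambda^{\frac{n}{(n+1)(n+2)}}\, R^{\frac{1}{n+2}+\e}\|f\|_2.
\end{equation*}
Combining this upper bound with the lower bound $\|e^{it\Delta}f\|_{L^2(X)} \gtrsim R^{1/2}\|f\|_2$ assumed in the hypothesis yields
\begin{equation*}
R^{1/2} \lesssim_\e \lambda^{\frac{n}{(n+1)(n+2)}} R^{\frac{1}{n+2}+\e}.
\end{equation*}

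Finally, I would solve for $\lambda$. Moving the powers of $R$ to one side gives $\lambda^{\frac{n}{(n+1)(n+2)}} \gtrsim_\e R^{\frac{1}{2} - \frac{1}{n+2} - \e} = R^{\frac{n}{2(n+2)} - \e}$, and raising both sides to the power $\frac{(n+1)(n+2)}{n}$ produces $\lambda \gtrsim_\e R^{\frac{n+1}{2} - \e'}$ for some $\e' \to 0$ as $\e \to 0$. Since $\e > 0$ is arbitrary, this is exactly $\lambda \gtrapprox R^{(n+1)/2}$, as required. There is no real obstacle here: the content is entirely in Theorem \ref{thm-L2X}, and the corollary is obtained by specializing $\alpha$ to its largest allowed value and tracking exponents.
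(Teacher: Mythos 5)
Your proposal is correct and is exactly the paper's intended argument: the paper itself indicates (just before the corollary) that one should take $\alpha = n+1$ so that $\gamma \lesssim 1$ and then apply Theorem \ref{thm-L2X}. The exponent arithmetic you carry out is the straightforward computation the paper leaves implicit.
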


As a remark, the scale $R^{1/2}$ in Corollary \ref{cor-fillin} is the largest one can have. Indeed, with the assumption of the corollary, the unit cubes in $X$ do not have to almost fill $R^{\beta}$-cubes completely for $\beta > 1/2$. One can see this from the Knapp example where we only have one wave packet.

To prove our main result - Theorem \ref{thm-L2X}, we will use a broad-narrow analysis, which has similar spirit as the techniques in the work of Bourgain-Guth \cite{BG}, Bourgain \cite{jB12}, Bourgain-Demeter \cite{BD} and Guth \cite{G2}. 

In the broad case, we can exploit the transversality and apply the multilinear refined Strichartz estimate, which is a result obtained by Guth, Li and the authors in \cite{DGLZ} (see \cite{DGL, DGOWWZ, DGLZ} for applications of the refined Strichartz estimate). In the narrow case, we use the $l^2$ decoupling theorem of Bourgain-Demeter \cite{BD} in a lower dimension and perform induction on scales. The way we do induction has its roots in the proof of the linear refined Strichartz estimate, due to Guth, Li and the first author (essentially proved in \cite{DGL}, see \cite{DGLZ} for the statement in the general setting).

Our method is related to Bourgain's in \cite{jB12}, where he has a similar broad-narrow analysis, (Here we have the size of the small ball being $K^2$ instead of $K$ as in \cite{jB12} for a technical issue similar to what one has in \cite{BD,G2}). He applied multilinear restriction to control the broad part in the sharp range $s>\frac{n}{2(n+1)}$ (except the endpoint). He speculated from this that the above range of $s$ might be sharp (see the end of the introduction in \cite{jB16}). In \cite{jB12} the narrow part was handled following the general approach from \cite{BG}, which gives non-sharp estimates. Historically, one could view the present non-endpoint solution to Carleson's problem as building on \cite{jB12}, providing a subtler way of handling the narrow part and proving Corollary \ref{cor-L2X}. For the stronger Theorem \ref{thm-L2X} and Corollary \ref{cor-fillin}, one needs a different ingredient, namely the multilinear refined Strichartz in \cite{DGLZ}, to handle the broad part.

In Section \ref{sec-app} we show how Corollary \ref{cor-L2X} and Theorem \ref{thm-L2} follow from Theorem \ref{thm-L2X},
and we also present applications of Theorem \ref{thm-L2X} to other problems - bounding the size of the divergence set of Schr\"odinger solutions (Theorem \ref{thm-pc-f}), the Falconer distance set problem (Theorem \ref{thm-falc} and \ref{thm-falc1}) and the spherical average Fourier decay rates of fractal measures (Theorem \ref{thm-avgdec}).
We prove Theorem \ref{thm-L2X} in Section 
\ref{sec-pf}.

\vspace{.1in}

\noindent \textbf{Notation.} We write $A\lesssim B$ if $A\leq CB$ for some absolute constant $C$, $A \sim B$ if $A\lesssim B$ and $B\lesssim A$; $A\ll B$ if $A$ is much less than $B$; $A\lessapprox B$ if $A\leq C_\e R^\e B$ for any $\e>0, R>1$. Sometimes we also write $A\lesssim B$ if $A\leq C_\e B$ for some constant $C_\e$ depending on $\e$ (when the dependence on $\e$ is unimportant).

By an $r$-ball (cube) we mean a ball (cube) of radius (side length) $r$. An $r\times\cdots\times r \times L$-tube (box) means a tube (box) with radius (short sides length) $r$ and length $L$. For a set $\cs$, $\#\cs$ denotes its cardinality.

\begin{acknowledgement}
The authors would like to thank Larry Guth and Xiaochun Li for several discussions. They also thank Larry Guth for making some historical remarks, as well as sharing his lecture notes on decoupling online, from which they got much inspiration. The second author would like to thank Jean Bourgain and Zihua Guo who introduced the problem to him. The authors are also indebted to Daniel Eceizabarrena  and  Luis Vega for a discussion on the history of the Schr\"odinger maximal estimate in dimension $1+1$.

The material is based upon work supported by the National Science Foundation under Grant No. 1638352, the Shiing-Shen Chern Fund and the James D. Wolfensohn Fund while the authors were in residence at the Institute for Advanced Study during the academic year 2017-2018.
\end{acknowledgement}

\section{Applications of Theorem \ref{thm-L2X}} \label{sec-app} 
\setcounter{equation}0

\subsection{Sharp $L^2$ estimate of Schr\"odinger maximal function} In this subsection, we show how Corollary \ref{cor-L2X} and Theorem \ref{thm-L2} follow from Theorem \ref{thm-L2X}, via the dyadic pigeonholing argument and the locally constant property.

\begin{proof}[Proof of (Theorem \ref{thm-L2X} $\implies$ Corollary \ref{cor-L2X})]
Given $X=\bigcup_{k} B_k$, a union of lattice unit cubes in $B^{n+1}(0,R)$ satisfying the assumptions of Corollary \ref{cor-L2X}, we sort the lattice $R^{1/2}$-cubes in $\ZR^{n+1}$ intersecting $X$ by the number $\lambda$ of unit cubes $B_k$ contained in it. Since $1\leq \lambda \leq R^{O(1)}$, there are only $O(\log R)$ choices for the dyadic number $\lambda$. So we can choose a dyadic number $\lambda$ and a subset $\cb_\lambda$ of $\{B_k\}$ such that for each unit cube $B$ in $\cb_\lambda$, the lattice $R^{1/2}$-cube containing it contains $\sim \lambda$ many unit cubes from $\cb_\lambda$ and
$$
\|\eit f\|_{L^2(X)} \lessapprox \|\eit f\|_{L^2(\bigcup_{B\in \cb_{\lambda}} B)}\,.
$$
By applying Theorem \ref{thm-L2X} to $\|\eit f\|_{L^2(\bigcup_{B\in \cb_{\lambda}} B)}$, we get
\begin{equation*}
\|\eit f\|_{L^2(X)} \lessapprox \gamma^{\frac{2}{(n+1)(n+2)}} \lambda^{\frac{n}{(n+1)(n+2)}} R^{\frac{\alpha}{(n+1)(n+2)}}\|f\|_2\,,
\end{equation*}
and \eqref{eq-L2X'} follows from the fact that $\lambda \leq \gamma R^{\alpha/2}$.
\end{proof}

\begin{proof}[Proof of (the case $\alpha=n$ of Corollary \ref{cor-L2X} $\implies$ Theorem \ref{thm-L2})] 
We will show that
\begin{equation} \label{linL2}
\left\|\sup_{0<t<R} |\eit f|\right\|_{L^2(B^n(0,R))} \lessapprox R^{\frac{n}{2(n+1)}} \|f\|_2
\end{equation}
holds for all $R\geq 1$ and all $f$ with Fourier support in $B^n(0,1)$.

By viewing $|\eit f(x)|$ essentially as constant on unit balls\footnote[2]{We refer the readers to \cite[Sections 2-5]{BG} for the standard formalism of this locally constant property.},  we can find a set $X$ described as follows: $X$ is a union of unit balls in $B^n(0,R)\times [0,R]$ satisfying the property that each vertical thin tube of dimensions $1\times \cdots \times 1 \times R$ contains exactly one unit ball in $X$, and 
\begin{equation}
\left\| \sup_{0<t< R} |\eit f| \right\|_{L^2(B^n(0,R))} \lessapprox \|\eit f\|_{L^2(X)}\,.
\end{equation}
The desired estimate \eqref{linL2} follows by applying Corollary \ref{cor-L2X} to $\|\eit f\|_{L^2(X)}$ with $\al=n$ and $\gamma \lesssim 1$.
\end{proof}

\subsection{Other applications} \label{sec-other}
By formalizing the locally constant property, from Corollary \ref{cor-L2X} we derive some weighted $L^2$ estimates - Theorem \ref{thm-L2-al} and \ref{thm-L2-al'}, which in turn have applications to several problems described below.

\begin{definition}
Let $\al\in(0,d]$. We say that $\mu$ is an \emph{$\al$-dimensional measure} in $\ZR^d$ if it is a probability measure supported in the unit ball $B^d(0,1)$ and satisfies that
\begin{equation}
\mu(B(x,r))\leq C_\mu r^\al, \quad \forall r>0, \quad \forall x\in \ZR^d.
\end{equation}
\end{definition}

Denote $d\mu_R(\cdot):=R^\al d\mu(\frac \cdot R)$.

\begin{theorem} \label{thm-L2-al}
Let $n\geq 1,\al\in(0,n]$ and $\mu$ be an $\al$-dimensional measure in $\ZR^n$. Then
\begin{equation} \label{L2-al}
\left\|\sup_{0<t<R} |\eit f|\right\|_{L^2\left(B^n(0,R);d\mu_R(x)\right)} \lessapprox R^{\frac{\al}{2(n+1)}} \|f\|_2\,,
\end{equation}
whenever $R\geq 1$ and $f$ has Fourier support in $B^n (0, 1)$.
\end{theorem}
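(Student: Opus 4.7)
The plan is to reduce the weighted maximal estimate \eqref{L2-al} to a fractal $L^2$ restriction bound of the form \eqref{eq-L2X'} by combining the locally constant property of $\eit f$ on spacetime unit cubes with a dyadic pigeonhole on the local $\mu_R$-mass, and then to invoke Corollary \ref{cor-L2X}. Since $\widehat f$ is supported in $B^n(0,1)$, the spacetime Fourier support of $\eit f$ sits on a bounded piece of the paraboloid, so $\eit f$ is essentially constant on spacetime unit cubes.

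First I would partition $B^n(0,R)$ into lattice unit cubes $\{B_j\}$ with centers $x_j$, and for each $j$ pick an integer $t_j\in\{0,1,\dots,\lfloor R\rfloor\}$ that achieves, up to a uniform constant, $\sup_{0<t<R}|e^{it\Delta}f(x_j)|$ (after discretizing $t$ on unit intervals). Setting $Q_j:=B_j\times[t_j,t_j+1]$, the locally constant property should give
\begin{equation*}
\sup_{0<t<R}|\eit f(x)|^2 \;\lesssim\; |e^{it_j\Delta}f(x_j)|^2 \;\lesssim\; \|\eit f\|_{L^2(Q_j)}^2 \qquad (x\in B_j),
\end{equation*}
whence
\begin{equation*}
\int\sup_{0<t<R}|\eit f|^2\,d\mu_R \;\lesssim\; \sum_{j}\mu_R(B_j)\,\|\eit f\|_{L^2(Q_j)}^2.
\end{equation*}
Because $\mu_R(B_j)\leq C_\mu$, I would then dyadically pigeonhole on $\mu_R(B_j)\sim\lambda$ to isolate a single dyadic $\lambda\lesssim 1$ and a subfamily $\mathcal F_\lambda$ such that, with $X_\lambda:=\bigcup_{j\in\mathcal F_\lambda}Q_j$,
\begin{equation*}
\int\sup_{0<t<R}|\eit f|^2\,d\mu_R \;\lessapprox\; \lambda\,\|\eit f\|_{L^2(X_\lambda)}^2.
\end{equation*}

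Next I would feed the $\alpha$-dimensional Frostman bound on $\mu_R$ into the density parameter $\gamma$ for $X_\lambda$. For any ball $B^{n+1}(x',r)\subset B^{n+1}(0,R)$ with $r\geq 1$, writing $x'=(x'',t')$ with $x''\in\ZR^n$, each cube $Q_j\subset B^{n+1}(x',r)$ has its spatial footprint $B_j$ contained in $B^n(x'',Cr)$, so
\begin{equation*}
\#\{Q_j\subset B^{n+1}(x',r):j\in\mathcal F_\lambda\} \;\leq\; \frac{\mu_R(B^n(x'',Cr))}{\lambda} \;\lesssim\; \frac{r^\alpha}{\lambda},
\end{equation*}
which gives $\gamma\lesssim 1/\lambda$. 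Inserting this into \eqref{eq-L2X'} should yield
\begin{equation*}
\lambda\,\|\eit f\|_{L^2(X_\lambda)}^2 \;\lessapprox\; \lambda^{(n-1)/(n+1)}\,R^{\alpha/(n+1)}\,\|f\|_2^2 \;\leq\; R^{\alpha/(n+1)}\,\|f\|_2^2,
\end{equation*}
using $\lambda\lesssim 1$, and taking square roots gives \eqref{L2-al}.

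The main technical care lies in arranging the locally constant reduction uniformly in $j$, so that a single spacetime unit cube $Q_j$ captures $\sup_t|\eit f|$ on the whole spatial cube $B_j$; this is a standard formalization along the lines of \cite[Sections 2-5]{BG} cited in the paper, with the polynomial losses from the discretization of $t$ and from the dyadic pigeonhole absorbed into $\lessapprox$. Once the reduction is set up, everything else is a direct consequence of the Frostman bound $\mu_R(B^n(x,r))\lesssim r^\alpha$ and Corollary \ref{cor-L2X}.
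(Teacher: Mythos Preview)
Your proposal is correct and follows essentially the same strategy as the paper's proof: linearize the supremum via the locally constant property to land on a union $X$ of spacetime unit cubes, dyadically pigeonhole on the local mass $\mu_R(B_j)\sim\lambda$ (the paper calls this $\nu_m\sim\nu$), verify $\gamma\lesssim\lambda^{-1}$ from the Frostman condition, and apply Corollary~\ref{cor-L2X}. The only notable difference is that the paper formalizes the locally constant step via convolution $(Ef)^2=(Ef)^2*\psi$ and works with $R^\varepsilon$-balls rather than unit cubes to absorb Schwartz tails rigorously, which is exactly the technical care you flag in your last paragraph.
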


\begin{theorem} \label{thm-L2-al'}
Let $n\geq 1,\al\in(0,n+1]$ and $\mu$ be an $\al$-dimensional measure in $\ZR^{n+1}$. Then
\begin{equation} \label{L2-al'}
\left\|\eit f\right\|_{L^2\left(B^{n+1}(0,R);d\mu_R(x,t)\right)} \lessapprox R^{\frac{\al}{2(n+1)}} \|f\|_2\,,
\end{equation}
whenever $R\geq 1$ and $f$ has Fourier support in $B^{n} (0, 1)$.
\end{theorem}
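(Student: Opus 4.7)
The plan is to reduce the weighted $L^2$ integral to a discrete sum over lattice unit cubes using the locally constant property, dyadically pigeonhole those cubes by their $\mu_R$-mass, and then apply Corollary \ref{cor-L2X} to each pigeonholed family.

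First I would exploit that, since $\wh f$ is supported in $B^n(0,1)$, the function $|\eit f|^2$ has Fourier support in a bounded region and is therefore essentially constant on lattice unit cubes in $\ZR^{n+1}$. Letting $\{B_k\}$ be the lattice unit cubes contained in $B^{n+1}(0,R)$ and $c_k$ the center of $B_k$, this yields
$$\|\eit f\|_{L^2(B^{n+1}(0,R);d\mu_R)}^2 \lessapprox \sum_k |\eit f(c_k)|^2 \mu_R(B_k)\,.$$
Each weight satisfies $\mu_R(B_k)\lesssim C_\mu$ by the $\al$-dimensional condition at radius $1$. I would then dyadically pigeonhole: for each dyadic $\la'$ with $R^{-100}\lesssim \la'\lesssim 1$, set $\cb_{\la'}:=\{B_k:\mu_R(B_k)\sim \la'\}$ and $X_{\la'}:=\bigcup_{B_k\in \cb_{\la'}} B_k$. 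Only $O(\log R)$ values of $\la'$ remain after discarding cubes of negligible weight.

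Next, I would apply Corollary \ref{cor-L2X} to each $\|\eit f\|_{L^2(X_{\la'})}$. The key input is the bound on the parameter $\ga$ from \eqref{ga-L2'}: for every ball $B(x',r)\subset B^{n+1}(0,R)$ with $r\ge 1$,
$$\#\{B_k\in \cb_{\la'}:B_k\subset B(x',r)\}\lesssim \frac{\mu_R(B(x',r))}{\la'}\leq \frac{C_\mu r^\al}{\la'}\,,$$
so $\ga\lesssim 1/\la'$ for the set $X_{\la'}$. The corollary then gives
$$\|\eit f\|_{L^2(X_{\la'})}^2 \lessapprox (1/\la')^{2/(n+1)} R^{\al/(n+1)} \|f\|_2^2\,.$$

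Summing the dyadic pieces produces
$$\sum_{\la'} \la' \|\eit f\|_{L^2(X_{\la'})}^2 \lessapprox R^{\al/(n+1)}\|f\|_2^2 \sum_{\la'} (\la')^{(n-1)/(n+1)} \lessapprox R^{\al/(n+1)}\|f\|_2^2\,,$$
since the exponent $(n-1)/(n+1)$ is nonnegative for $n\ge 1$, each summand is $O(1)$, and there are only $O(\log R)$ of them; taking square roots gives \eqref{L2-al'}. The only real subtlety I anticipate is the initial locally-constant reduction against the potentially singular weight $d\mu_R$, which I would handle via a standard convolution of $|\eit f|^2$ with a unit-scale bump function adapted to its Fourier support, following the formalism of \cite[Sections 2-5]{BG}.
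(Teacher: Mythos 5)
Your proof is correct and follows essentially the same approach the paper intends: the paper explicitly defers the proof of Theorem \ref{thm-L2-al'}, noting it is ``entirely similar'' to (and in fact slightly simpler than, since there is no supremum in $t$) the proof it gives of Theorem \ref{thm-L2-al}, which uses exactly your strategy of locally constant reduction to lattice unit cubes via a Schwartz convolution, dyadic pigeonholing on the local $\mu_R$-mass, and an application of Corollary \ref{cor-L2X} with $\gamma\lesssim 1/\lambda'$. (One small caveat shared with the paper's own argument: Corollary \ref{cor-L2X} is stated for $1\le\alpha\le n+1$, so for $\alpha<1$ the stated bound requires an extra word, e.g.\ applying the corollary with exponent $1$ instead and noting an $\alpha$-dimensional measure with $\alpha<1$ is also $1$-dimensional, which yields the weaker power $R^{1/(2(n+1))}$.)
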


%Theorem \ref{thm-L2-al} and \ref{thm-L2-al'} are new when $n\geq 2, \alpha \in (2,3)$ or $n \geq 3, \alpha \in (\frac{n+1}{2},n+1)$ (c.f. \cite{DGOWWZ,LR}).
We defer the proof of these weighted $L^2$ estimates to the end of this subsection. Let's first see their applications. We omit history and various previous results on the following three problems and refer the readers to \cite{DGLZ,DGOWWZ,LR} and the references therein. 

\noindent \textbf{(I) Hausdorff dimension of divergence set of Schr\"odinger solutions}

A natural refinement of Carleson's problem was initiated by Sj\"ogren
and Sj\"olin \cite{SS}: determine the size of divergence set, in particular, consider
$$
\al_n(s):= \sup_{f\in H^s(\ZR^n)} {\rm dim} \left\{x\in \ZR^n: \lim_{t \to 0}e^{it\Delta}f(x)\neq f(x) \right\}\,,
$$
where ${\rm dim}$ stands for the Hausdorff dimension.

The following theorem is a direct result of Theorem \ref{thm-L2-al} (c.f. \cite{DGLZ, LR}). When $n=2$, it recovers the corresponding result derived from the sharp $L^3$ estimate of the Schr\"odinger maximal function in D.-Guth-Li \cite{DGL}. When $n\geq 3$, it improves the previous best known result in D.-Guth-Li-Z. \cite{DGLZ}.

\begin{theorem} \label{thm-pc-f}
Let $n\geq 2$. Then
\begin{equation}
\al_n(s)\leq n+1-\frac{2(n+1)s}{n}, \quad
\frac{n}{2(n+1)}<s<\frac n 4\,.
\end{equation}
\end{theorem}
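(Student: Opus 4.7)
The plan is to argue by contradiction. Fix $s\in\bigl(\frac{n}{2(n+1)},\frac{n}{4}\bigr)$, set $\alpha:=n+1-\frac{2(n+1)s}{n}$, and suppose that the divergence set of $e^{it\Delta}f$ for some $f\in H^s(\mathbb{R}^n)$ has Hausdorff dimension strictly greater than $\alpha$. After a standard localization, the divergence set may be assumed contained in $B^n(0,1)$, and Frostman's lemma then produces an $\alpha$-dimensional probability measure $\mu$ supported on it. A density argument---approximating $f$ in $H^s$ by Schwartz functions (for which pointwise convergence is trivial) and controlling the error in $L^2(d\mu)$ both via the maximal operator and via the Sobolev--Frostman embedding $H^s\hookrightarrow L^2(d\mu)$ (which holds for $2s>n-\alpha$, comfortably in our range)---reduces the desired contradiction to the weighted maximal inequality
\begin{equation*}
\Bigl\|\sup_{0<t<1}|e^{it\Delta}f|\Bigr\|_{L^2(d\mu)} \le C\,\|f\|_{H^s(\mathbb{R}^n)}.
\end{equation*}

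To prove this maximal inequality, I would first decompose $f=\sum_N f_N$ by Littlewood--Paley, with $\widehat{f_N}$ supported in $\{|\xi|\sim N\}$, so that $\|f\|_{H^s}^2\sim\sum_N N^{2s}\|f_N\|_2^2$. For each dyadic $N$, set $g_N(y):=f_N(y/N)$, which has Fourier support in the unit ball and satisfies the parabolic rescaling identity $e^{it\Delta}f_N(x)=e^{iN^2t\Delta}g_N(Nx)$. Under the change of variables $y=Nx$, the measure $\mu$ pushes forward to a measure $\mu_N$ supported in $B^n(0,N)$ with density bound $\mu_N(B^n(y,r))\lesssim(r/N)^\alpha$. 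Applying Theorem~\ref{thm-L2-al} to $g_N$ against $\mu_N$---choosing the scale $R$ so as to exploit both the smaller support and the density bound of $\mu_N$---and then undoing the rescaling produces an estimate of the schematic form
\begin{equation*}
\Bigl\|\sup_{0<t<1}|e^{it\Delta}f_N|\Bigr\|_{L^2(d\mu)}^2 \lessapprox N^{\,n-\frac{n\alpha}{n+1}}\|f_N\|_2^2.
\end{equation*}
Summing via the triangle inequality and Cauchy--Schwarz,
\begin{equation*}
\Bigl\|\sup_{0<t<1}|e^{it\Delta}f|\Bigr\|_{L^2(d\mu)}
\lessapprox \Bigl(\sum_N N^{\,n-\frac{n\alpha}{n+1}-2s}\Bigr)^{\!1/2}\|f\|_{H^s},
\end{equation*}
and the dyadic geometric series converges precisely when $n-\frac{n\alpha}{n+1}<2s$, equivalently $\alpha>n+1-\frac{2(n+1)s}{n}$, which is our standing hypothesis.

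The main obstacle I expect is the careful bookkeeping of scales in the application of Theorem~\ref{thm-L2-al} to $g_N$ against $\mu_N$: the pushforward $\mu_N$ lives in the small ball $B^n(0,N)\subset B^n(0,N^2)$ while, after rescaling, the supremum runs over $\sigma\in(0,N^2)$, and one must exploit both pieces of information about $\mu_N$ (support \emph{and} density) in order to land on the sharp exponent $n-n\alpha/(n+1)$ in the per-$N$ bound rather than a weaker variant. An equivalent and perhaps cleaner route is to apply the spacetime version Theorem~\ref{thm-L2-al'} to the graph measure $\nu(A):=\mu(\{x:(x,t(x))\in A\})$ on $\mathbb{R}^{n+1}$, where $t(x)$ is chosen measurably so that $|e^{it(x)\Delta}f(x)|>\delta/2$ on a subset of positive $\mu$-measure; since $\nu(B^{n+1}((y,s),r))\le\mu(B^n(y,r))\lesssim r^\alpha$, the measure $\nu$ is automatically $\alpha$-dimensional in spacetime, and the spacetime parabolic rescaling then produces the same sharp exponent in a single stroke.
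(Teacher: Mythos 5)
Your proposal is correct and reproduces the standard deduction that the paper itself outsources to \cite{DGLZ, LR} (the paper merely says the theorem is ``a direct result of Theorem~\ref{thm-L2-al}'' and gives no proof). The structure --- localization, Frostman's lemma, the Sobolev--Frostman trace inequality to handle the approximation error, Littlewood--Paley decomposition, parabolic rescaling, application of the weighted maximal estimate, and summation of a geometric series --- is exactly the expected argument, and the per-$N$ bound $N^{n-\frac{n\alpha}{n+1}}\|f_N\|_2^2$ is the right one (at $\alpha=n$ it reduces to the usual $N^{\frac{n}{n+1}}$).

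Two points worth tightening. First, a notational slip: you set $\alpha:=n+1-\frac{2(n+1)s}{n}$ at the outset, construct an ``$\alpha$-dimensional measure'' by Frostman, and then observe the dyadic series converges precisely when $\alpha>n+1-\frac{2(n+1)s}{n}$ --- but with your definition that reads $\alpha>\alpha$. The fix is routine: if the divergence set had dimension strictly exceeding $\alpha_0:=n+1-\frac{2(n+1)s}{n}$, Frostman yields an $\alpha'$-dimensional measure for some $\alpha'\in(\alpha_0,n]$, and it is $\alpha'$ that enters the per-$N$ bound, so the series exponent becomes $n-\frac{n\alpha'}{n+1}-2s<0$ strictly, absorbing the $R^\varepsilon$ loss from Theorem~\ref{thm-L2-al} as well. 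Second, on the bookkeeping you flag: applying Theorem~\ref{thm-L2-al} at scale $R=N^2$ to the weight $N^{\alpha}\,d\mu_N$ is legitimate even though that weight is not literally of the form $R^\alpha d\mu'(\cdot/R)$ for a \emph{probability} measure $\mu'$ (its total mass is $N^\alpha$, not $N^{2\alpha}$). What the proof of Theorem~\ref{thm-L2-al} actually uses is only that the weight is supported in $B^n(0,R)$ and satisfies $w(B(x,r))\lesssim r^\alpha$ for $r\geq1$; both hold here, and the smaller total mass only helps, so the stated $R^{\alpha/(2(n+1))}$ bound applies verbatim and yields your exponent after undoing the rescaling. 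Your alternative via the graph measure in spacetime and Theorem~\ref{thm-L2-al'} also works and sidesteps this normalization discussion.
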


\noindent \textbf{(II) Falconer distance set problem}

Let $E\subset\mathbb{R}^d$ be a compact subset, its distance set $\Delta(E)$ is defined by 
$$
\Delta(E):=\{|x-y|:x,y\in E\}\,.
$$

\begin{conjecture} [Falconer \cite{F}]
Let $d\geq 2$ and $E\subset\mathbb{R}^d$ be a compact set. Then 
$$
{\rm dim}(E)> \frac d 2 \Rightarrow |\Delta(E)|>0.
$$
Here $|\cdot|$ denotes the Lebesgue measure and ${\rm dim}(\cdot)$ is the Hausdorff dimension.
\end{conjecture}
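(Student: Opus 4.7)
The plan is to attack the conjecture through the Mattila--Wolff--Erdo\u{g}an Fourier-analytic framework, using the fractal $L^2$ restriction estimate of Theorem \ref{thm-L2X} --- in its manifestation as an improved spherical average Fourier decay rate for Frostman measures, which is the substance of Theorem \ref{thm-avgdec} --- to push the threshold all the way down to $s > d/2$. Assume for contradiction that $\dim(E) > d/2$ but $|\Delta(E)| = 0$. Fix $s$ with $d/2 < s < \dim(E)$ and invoke Frostman's lemma to produce a probability measure $\mu$ supported on $E$ with $\mu(B(x,r)) \lesssim r^s$ for all $x \in \mathbb{R}^d$, $r > 0$. A standard pinning reduction shows it suffices to exhibit a single $x_0 \in \mathrm{supp}(\mu)$ such that the pinned distance measure $\nu_{x_0} := (|\cdot - x_0|)_* \mu$ is absolutely continuous with density in $L^2(\mathbb{R}_+)$; indeed such an $x_0$ forces $|\Delta(E)| \geq |\mathrm{supp}(\nu_{x_0})| > 0$, contradicting the assumption.

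To produce such an $x_0$, I would expand $\|\nu_{x_0}\|_{L^2}^2$ by Plancherel in the radial variable and average the result against $d\mu(x_0)$. Mattila's identity then identifies the problem with showing finiteness of
\[
\int_1^\infty S_\mu(r)^2 \, r^{d-1} \, dr, \qquad S_\mu(r) := \int_{S^{d-1}} |\widehat{\mu}(r\omega)|^2 \, d\sigma(\omega).
\]
At the critical exponent $s = d/2^+$ this integral converges as soon as one has the sharp spherical average decay $S_\mu(r) \lesssim r^{s-d}$, so everything is reduced to establishing Fourier decay of the spherical averages of $\mu$ at rate $r^{-(d-s)}$ for every $s > d/2$.

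For the spherical average bound itself I would transfer Theorem \ref{thm-L2X} from the paraboloid to the sphere by the standard pseudo-conformal rescaling, which at fixed radius $r$ identifies restriction estimates for $e^{it\Delta}$ with restriction estimates for the Fourier transform on $r \cdot S^{d-1}$. The lifted set $X_\mu$ is a discretization of the $r$-dilate of $\mathrm{supp}(\mu)$; its Frostman property controls simultaneously the fractal exponent $\alpha$ and the parameter $\gamma$ of \eqref{ga-L2}, while the condition $\mu(B(x,R^{-1/2})) \lesssim R^{-s/2}$ feeds into the sparseness parameter $\lambda$. Setting $\alpha = s$ in the sphere picture, the exponent $\tfrac{\alpha}{(n+1)(n+2)}$ combined with the extra $\lambda^{n/((n+1)(n+2))}$ gain from Theorem \ref{thm-L2X} should output $S_\mu(r) \lesssim r^{-\beta(s)}$ with $\beta(s) \geq d - s$ at the critical value.

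The main obstacle is the endpoint behavior at $s = d/2$. The $R^\varepsilon$ loss in \eqref{eq-L2X} must be absorbed uniformly in $s$, and the sparseness parameter $\lambda$ must be tied to genuine $s$-dimensionality of $\mu$ at \emph{every} dyadic scale, not merely globally --- otherwise a straightforward application of Theorem \ref{thm-L2X} falls short, which is exactly why the raw decay it provides only yields Theorem \ref{thm-falc} and Theorem \ref{thm-falc1} rather than the full conjecture. To close this gap I would run the expansion bilinearly in the pinning variable, i.e.\ work with weighted averages over pairs $(x_0, x_1) \in E \times E$ of nearby pinned distance measures in the spirit of Guth--Iosevich--Ou--Wang, but drive the resulting restriction inequality by the sharper fractal $L^2$ estimate of Theorem \ref{thm-L2X} rather than by $k$-broad cone decoupling. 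The decisive hard step will be verifying that this bilinear-iterative scheme improves $\beta(s)$ stably above $d - s$ for every $s$ infinitesimally above $d/2$, with the $R^\varepsilon$ loss compensated by the sparseness gain at every scale of the dyadic decomposition; carrying this out uniformly at the endpoint is where all previous attempts have failed.
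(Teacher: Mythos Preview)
The statement you are attempting to prove is Falconer's \emph{conjecture}, which is open; the paper does not prove it and does not claim to. What the paper extracts from Theorem~\ref{thm-L2X} is only the partial result Theorem~\ref{thm-falc} with threshold $\tfrac{d^2}{2d-1}$, not $\tfrac{d}{2}$. So there is no ``paper's own proof'' to compare against.

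Your proposal has a genuine and fatal gap. You correctly identify that, via Mattila's integral combined with the finite $s$-energy of a Frostman measure, the criterion $|\Delta(E)|>0$ follows once
\[
\beta_d(s)\ \geq\ d-s\qquad\text{for some }s>\tfrac{d}{2}.
\]
But Theorem~\ref{thm-L2X} (equivalently Theorem~\ref{thm-avgdec}) only delivers $\beta_d(s)\geq\tfrac{(d-1)s}{d}$, and $\tfrac{(d-1)s}{d}\geq d-s$ holds precisely when $s\geq\tfrac{d^2}{2d-1}$; this is exactly how Theorem~\ref{thm-falc} is derived, and it is strictly weaker than what you need. Worse, the bound $\beta_d(s)\geq d-s$ you require at $s=\tfrac{d}{2}^{+}$ is \emph{false}: already in $d=2$ Wolff \cite{W} showed that $\beta_2(\alpha)=\alpha/2$ is sharp for $\alpha\in(1,2)$, so at $\alpha=1=\tfrac{d}{2}$ one has $\beta_2(1)=\tfrac12<1=d-\alpha$. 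Hence no amount of sharpening of Theorem~\ref{thm-L2X} can push the pure spherical-average route down to $\tfrac{d}{2}$, and your claim that ``the exponent \ldots\ combined with the extra $\lambda^{n/((n+1)(n+2))}$ gain from Theorem~\ref{thm-L2X} should output $S_\mu(r)\lesssim r^{-\beta(s)}$ with $\beta(s)\geq d-s$'' cannot be correct.

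You seem to recognize this yourself: the last paragraph abandons the spherical-average reduction for a Guth--Iosevich--Ou--Wang style bilinear pinning argument, but you do not carry out a single step of it, and you close by conceding that ``carrying this out uniformly at the endpoint is where all previous attempts have failed.'' That sentence is an accurate description of the state of the art; it is not a proof.
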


Following a scheme due to Mattila (c.f. \cite[Proposition 2.3]{DGOWWZ}), Theorem \ref{thm-L2-al'} implies the following result towards Falconer's conjecture. When $d=2,3$, this recovers the previous best known results of Wolff (d=2, \cite{W}) and  D.-Guth-Ou-Wang-Wilson-Z. (d=3, \cite{DGOWWZ}), via a different approach. In the case $d\geq 4$, this improves the previous best known result in \cite{DGOWWZ}:

\begin{theorem} \label{thm-falc}
Let $d\geq 2$ and $E\subset\mathbb{R}^d$ be a compact set with 
$$
{\rm dim}(E)> \frac{d^2}{2d-1}=\frac d 2 +\frac 1 4 + \frac{1}{8d-4}\,.
$$
Then $|\Delta(E)|>0$.
\end{theorem}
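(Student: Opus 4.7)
The plan is to apply the classical Mattila reduction of Falconer's distance conjecture to spherical $L^2$ averages of the Fourier transform of a Frostman measure (in the form of Proposition~2.3 of \cite{DGOWWZ}), and to supply the required spherical estimate directly from Theorem~\ref{thm-L2-al'}. Only the Fourier-analytic input is new in higher dimensions; the overall scheme runs parallel to the $d=3$ derivation in \cite{DGOWWZ}.

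First I would invoke Frostman's lemma to choose an $\alpha$-dimensional probability measure $\mu$ supported on $E$ for some $\alpha<{\rm dim}(E)$, which by hypothesis we may take with $\alpha > d^2/(2d-1)$. According to the cited Mattila reduction, to conclude $|\Delta(E)|>0$ it suffices to establish a power decay of the form
\begin{equation*}
R^{d-1}\sigma_\mu(R) \;:=\; R^{d-1}\int_{S^{d-1}}|\widehat\mu(R\omega)|^2\,d\sigma(\omega)\;\lessapprox\; R^{\alpha-\eta}
\end{equation*}
for some $\eta>0$ uniform in $R\ge 1$.

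Next I would reduce the spherical average to a Schr\"odinger-type quantity. Cover $S^{d-1}(0,R)$ with $\sim R^{(d-1)/2}$ caps of radius $\sim R^{1/2}$; on each cap, after translating its center to the origin and rescaling the normal coordinate, the sphere agrees with a truncated paraboloid in $\mathbb{R}^d$ up to $O(1)$ error, so each cap-piece of $\widehat\mu(R\cdot)$ becomes, modulo a harmless phase and lower-order corrections, a value of the Schr\"odinger extension $e^{it\Delta}f(x)$ at a point $(x,t)\in B^{n+1}(0,R)$ with $n+1=d$, for some $f$ depending on $\mu$ with Fourier support in $B^n(0,1)$. Summing over caps and applying orthogonality in the cap direction rewrites $R^{d-1}\sigma_\mu(R)$ as an $L^2$ integral of $|e^{it\Delta}f|^2$ against the $\alpha$-dimensional measure $\mu_R$ on $B^{n+1}(0,R)$, up to harmless constants.

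With that in place, Theorem~\ref{thm-L2-al'} applied with $n+1=d$ yields an upper bound of the form $R^{\alpha/d}\|f\|_2^2$ for this quantity; tracking the scaling through the previous step produces a power bound on $R^{d-1}\sigma_\mu(R)$ whose exponent becomes strictly less than $\alpha$ precisely when $\alpha > d^2/(2d-1)$, supplying the required $\eta>0$. The hard part will be the scaling bookkeeping: the $R^{1/2}$ cap scale is dictated by the need to control the curvature error cleanly while simultaneously ensuring that the $\alpha$-dimensional Frostman condition on $\mu$ transfers to the rescaled measure $\mu_R$ in the precise normalization demanded by Theorem~\ref{thm-L2-al'}; once this is arranged, the argument reduces to a direct invocation of the new Fourier input together with the $d=3$ template from \cite{DGOWWZ}.
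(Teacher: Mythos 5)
Your overall strategy---Frostman's lemma, the Mattila reduction as packaged in \cite[Prop.~2.3]{DGOWWZ}, then converting the spherical average into a weighted Schr\"odinger $L^2$ estimate via the $R^{1/2}$ cap/paraboloid approximation (the content of \cite[Remark~2.5]{DGOWWZ}) and applying Theorem~\ref{thm-L2-al'}---is exactly the route the paper takes, and it is the right one. However, your quantitative statement of the Mattila criterion contains an off-by-one error in the exponent which you should correct before leaning on the numerology.

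The Mattila scheme reduces matters to showing $\int_1^\infty \sigma_\mu(r)^2\, r^{d-1}\,dr < \infty$ (with $\sigma_\mu(r)=\int_{S^{d-1}}|\widehat\mu(r\omega)|^2\,d\sigma(\omega)$), and one closes the integral by pairing one factor of $\sigma_\mu(r)$ with the finite energy $\int_1^\infty \sigma_\mu(r)\,r^{s-1}\,dr \sim I_s(\mu)<\infty$ for $s<\alpha$. This yields the correct criterion $\sigma_\mu(R)\lessapprox R^{-(d-\alpha)-\eta}$ for some $\eta>0$, i.e.\ $R^{d-1}\sigma_\mu(R)\lessapprox R^{\alpha-1-\eta}$, not $R^{\alpha-\eta}$ as you wrote. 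The discrepancy matters: your version, combined with the decay $\beta_d(\alpha)\geq (d-1)\alpha/d$ supplied by Theorem~\ref{thm-L2-al'}, would give $|\Delta(E)|>0$ already for $\alpha>d(d-1)/(2d-1)$---for $d=2$ that is $\alpha>2/3$, which is false (Falconer's example shows one cannot go below $d/2$). Your final sentence then has a compensating error of exactly the same size: the correct computation gives $R^{d-1}\sigma_\mu(R)\lessapprox R^{(d-1)(d-\alpha)/d}$, and one needs this exponent to be strictly less than $\alpha-1$ (not $\alpha$), which is equivalent to $\alpha>d^2/(2d-1)$. You land on the right threshold, but only because the two slips cancel; both intermediate statements need to be adjusted for the proof to stand on its own. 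Otherwise the scheme, including deferring the cap-by-cap orthogonality and scaling bookkeeping to \cite{DGOWWZ}, is sound and matches the paper's proof.
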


By applying a very recent work of Liu \cite[Theorem 1.4]{Liu}, Theorem \ref{thm-L2-al'} also implies the following result for the pinned distance set problem, with the same threshold:

\begin{theorem} \label{thm-falc1}
Let $d\geq 2$ and $E\subset \mathbb{R}^d$ be a compact set with
$$
{\rm dim}(E)> \frac{d^2}{2d-1}=\frac d 2 +\frac 1 4 + \frac{1}{8d-4}\,.
$$
Then there exists $x\in E$ such that its pinned distance set
\[
\Delta_x(E):=\{|x-y|:\,y\in E\}
\]has positive Lebesgue measure.
\end{theorem}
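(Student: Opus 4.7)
The plan is to derive Theorem \ref{thm-falc1} from Theorem \ref{thm-L2-al'} by invoking Liu's pinned-distance machinery \cite[Theorem 1.4]{Liu} as a black box. Liu's framework is precisely engineered to upgrade a weighted $L^2$ estimate for the Schr\"odinger extension operator against an $\al$-dimensional measure into positivity of the pinned distance set, with the same numerical threshold that Mattila's scheme produces for the unpinned Falconer problem handled in Theorem \ref{thm-falc}.

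First, I would choose $s$ with $\frac{d^2}{2d-1} < s < \dim(E)$ and, by Frostman's lemma, fix a compactly supported probability measure $\mu$ on $E$ satisfying $\mu(B(x,r)) \lesssim r^s$ for all $x \in \ZR^d$ and $r > 0$. After a harmless rescaling and normalization, $\mu$ is an $s$-dimensional measure on $\ZR^d$ in the sense of the definition in Section \ref{sec-other}.

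Second, I would apply Theorem \ref{thm-L2-al'} with $n+1 = d$ and $\al = s$ to this $\mu$, obtaining
\[
\left\|e^{it\Delta} f\right\|_{L^2\left(B^d(0,R);\, d\mu_R(x,t)\right)} \lessapprox R^{\frac{s}{2d}} \|f\|_2
\]
for every $R \geq 1$ and every $f$ with Fourier support in $B^{d-1}(0,1)$. This is exactly the shape of weighted $L^2$ bound required as input by \cite[Theorem 1.4]{Liu}. Feeding it into his theorem produces a point $x \in E$ whose pinned distance set $\Delta_x(E)$ has positive Lebesgue measure, provided $s$ exceeds the critical exponent appearing in his conclusion, which is the same $\frac{d^2}{2d-1}$ as in the unpinned case, as can be seen by comparing with the derivation of Theorem \ref{thm-falc} via Mattila's scheme.

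Since Liu's theorem is invoked as a black box, the only bookkeeping is to verify that our weighted estimate has precisely the form and exponent his hypotheses require, after which the threshold arithmetic is identical to that of Theorem \ref{thm-falc}. The main obstacle to Theorem \ref{thm-falc1} is therefore not in this short deduction but in establishing the underlying weighted $L^2$ estimate Theorem \ref{thm-L2-al'}. Letting $s \searrow \frac{d^2}{2d-1}$, which is allowed since $\dim(E) > \frac{d^2}{2d-1}$, completes the proof.
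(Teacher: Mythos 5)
Your proposal is correct and follows essentially the same approach as the paper: the paper likewise derives Theorem \ref{thm-falc1} from Theorem \ref{thm-L2-al'} by invoking Liu's \cite[Theorem 1.4]{Liu} as a black box, noting that the threshold matches that of the unpinned case in Theorem \ref{thm-falc}. Your elaboration (Frostman's lemma, choosing $n+1=d$, $\al=s$, and letting $s \searrow \frac{d^2}{2d-1}$) fills in exactly the routine bookkeeping the paper leaves implicit.
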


\noindent \textbf{(III) Spherical average Fourier decay rates of fractal measures}

Let $\be_d(\al)$ denote the supremum of the numbers $\be$ for which 
\begin{equation} \label{eq:AvrDec}
\left\|\widehat \mu (R\cdot)\right\|_{L^2(\ZS^{d-1})}^2 \leq C_{\al,\mu} R^{-\be}
\end{equation}
whenever $R> 1$ and $\mu$ is an $\al$-dimensional measure in $\ZR^d$. The problem of identifying the precise value of $\beta_d(\al)$ was proposed by Mattila \cite{M04}. 

A lower bound of $\beta_d(\al)$ as in Theorem \ref{thm-avgdec} follows from Theorem \ref{thm-L2-al'} (c.f. \cite[Remark 2.5]{DGOWWZ}). When $d=2$, this recovers the sharp result of Wolff \cite{W}. When $d=3$ and $\al\in(\frac 32, 2]$, this recovers the previous best known result of D.-Guth-Ou-Wang-Wilson-Z. \cite{DGOWWZ}. In the case $d=3, \al\in(2,3)$ or $d\geq 4,\alpha\in(d/2,d)$, this improves the previous best known result in \cite{DGOWWZ}.

\begin{theorem} \label{thm-avgdec}
Let $d\geq 2$ and $\alpha\in (\frac d 2,d)$. Then 
$$
\beta_d(\al) \geq \frac{(d-1)\al}{d}\,.
$$
\end{theorem}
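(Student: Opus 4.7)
The plan is to deduce Theorem \ref{thm-avgdec} from Theorem \ref{thm-L2-al'} via a $TT^*$ duality followed by a standard parabolic reduction, in the spirit of \cite[Remark 2.5]{DGOWWZ}. Set $n:=d-1$, and let $E_R g(x) := \int_{\mathbb{S}^{d-1}} e^{iRx\cdot\xi}\,g(\xi)\,d\sigma(\xi)$ denote the Fourier extension operator from the sphere of radius $R$, so that $\widehat\mu(R\xi) = E_R^*\mu(\xi)$ for $\xi \in \mathbb{S}^{d-1}$. Since $\mu$ is a probability measure, $TT^*$ gives
$$
\|\widehat\mu(R\cdot)\|_{L^2(\mathbb{S}^{d-1})}^2 \le \|E_R\|_{L^2(\mathbb{S}^{d-1}) \to L^2(d\mu)}^2,
$$
and the substitution $x \mapsto Rx$ (which sends $d\mu$ to $R^{-\alpha}\,d\mu_R$) turns the right-hand side into $R^{-\alpha}\|E\|_{L^2(\mathbb{S}^{d-1}) \to L^2(d\mu_R)}^2$, where $E=E_1$ is the unit-sphere extension. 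Hence the desired lower bound $\beta_d(\alpha)\ge (d-1)\alpha/d$ reduces to the weighted extension inequality
\begin{equation}\label{eq:plan-goal}
\|Ef\|_{L^2(d\mu_R)}^2 \lessapprox R^{\alpha/d}\,\|f\|_{L^2(\mathbb{S}^{d-1})}^2.
\end{equation}

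To establish \eqref{eq:plan-goal}, I would pass from the sphere to the paraboloid. Decompose $\mathbb{S}^{d-1}$ into $\sim R^{n/2}$ finitely-overlapping caps $\tau$ of angular radius $R^{-1/2}$ and write $f=\sum_\tau f_\tau$. For each cap $\tau$ centered at $\xi_0$, write the sphere locally as a graph over its tangent plane, extract the phase $e^{ix\cdot\xi_0}$, and apply the parabolic change of variables $\xi-\xi_0 = R^{-1/2}v + O(R^{-1})$ in frequency together with the dual rescaling that stretches the tangential component of $x$ by $R^{-1/2}$ and the normal component by $R^{-1}$ in physical space. This identifies $Ef_\tau$, up to a unimodular phase, with a Schr\"odinger extension $\eit \tilde f_\tau$ of a function $\tilde f_\tau$ whose Fourier support lies in $B^n(0,1)$; simultaneously, $\mu_R$ is pushed forward to another $\alpha$-dimensional measure on $B^{n+1}(0,R)$ with comparable Frostman constant. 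Invoking Theorem \ref{thm-L2-al'} with $n+1=d$ and tracking the Jacobians then yields, for each individual cap,
$$
\|Ef_\tau\|_{L^2(d\mu_R)}^2 \lessapprox R^{\alpha/d}\,\|f_\tau\|_{L^2(\mathbb{S}^{d-1})}^2.
$$

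The last and hardest step is to sum these cap-wise estimates with only $R^\e$ loss. Since the Fourier transforms of the $Ef_\tau$ (viewed as distributions on $\mathbb{R}^d$) are supported on disjoint caps of $\mathbb{S}^{d-1}$, the functions are approximately $L^2(\mathbb{R}^d)$-orthogonal on scales $\gtrsim R$; but we need orthogonality in the weighted space $L^2(d\mu_R)$, which does not come for free. This is the main obstacle. My plan is to resolve it via a standard wave packet decomposition: on the support of $\mu_R$ each $Ef_\tau$ is essentially constant on a union of $R^{1/2}\times\cdots\times R^{1/2}\times R$ tubes with angular direction determined by $\tau$, so the cross terms $\langle Ef_{\tau_1}, Ef_{\tau_2}\rangle_{L^2(d\mu_R)}$ reduce to a combinatorial incidence count between tubes of distinct orientations and the $\alpha$-dimensional set underlying $\mu_R$; this count is controlled, up to $R^\e$, by the Frostman condition on $\mu_R$. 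Combined with the cap-wise bound, this produces \eqref{eq:plan-goal}, and hence Theorem \ref{thm-avgdec}.
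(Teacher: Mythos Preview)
Your $TT^*$ reduction to the weighted extension estimate \eqref{eq:plan-goal} is correct and is exactly how the paper proceeds (via \cite[Remark 2.5]{DGOWWZ}). The difficulty is what you do afterward.

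The paper does \emph{not} decompose the sphere into $\sim R^{(d-1)/2}$ caps of angular radius $R^{-1/2}$. It decomposes $\mathbb{S}^{d-1}$ into $O(1)$ caps of radius $\sim 1$, on each of which the sphere is a graph $(y,\phi(y))$ with positive definite Hessian. The corresponding extension operator is of Schr\"odinger type, and the proof of Theorem \ref{thm-L2-al'} (which rests on Corollary \ref{cor-L2X} and hence only on decoupling and multilinear Kakeya) goes through verbatim for any such elliptic surface. With only $O(1)$ pieces there is no summation issue, and \eqref{eq:plan-goal} follows immediately.

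Your route, by contrast, manufactures the problem you then struggle with. Two concrete gaps:

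\emph{(i) The rescaling claim is wrong.} The map sending an $R^{-1/2}$-cap to a unit cap is anisotropic: it compresses the tangential directions by $R^{-1/2}$ and the normal direction by $R^{-1}$. Under this map $B^{d}(0,R)$ does not go to $B^{d}(0,R)$ but to a slab of dimensions roughly $R^{1/2}\times\cdots\times R^{1/2}\times 1$, and the pushforward of $\mu_R$ has Frostman constant inflated by a factor $R^{\alpha}$, not ``comparable''. So Theorem \ref{thm-L2-al'} does not apply with the parameters you assert, and tracking Jacobians correctly does not yield the cap-wise bound you claim.

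\emph{(ii) The summation step is the whole theorem.} Even granting the cap-wise bound, establishing almost-orthogonality of the $Ef_\tau$ in $L^2(d\mu_R)$ is not a matter of ``incidence counting controlled by the Frostman condition''. Controlling the cross terms $\langle Ef_{\tau_1},Ef_{\tau_2}\rangle_{L^2(d\mu_R)}$ for separated caps with only $R^{\varepsilon}$ loss is essentially equivalent to the bilinear weighted restriction estimate, which is of the same depth as \eqref{eq:plan-goal} itself. Your sketch does not supply this.

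The fix is simply to use $O(1)$ unit-size caps instead of $R^{-1/2}$-caps; then Theorem \ref{thm-L2-al'} applies directly and no orthogonality argument is needed.
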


\vspace{.1in}

The proofs of Theorems \ref{thm-L2-al} and \ref{thm-L2-al'} are entirely similar and we only do the proof of the former here, which is slightly more involved.

\begin{proof}[Proof of Theorem \ref{thm-L2-al}]
Denote $\eit f(x)$ by $Ef(x,t)$, and $(x,t)$ by $\tilde x$.
Since $\mathrm{supp} \wh{f} \subseteq B^n(0, 1)$, we have $\mathrm{supp} \wh{Ef} \subseteq B^{n+1}(0, 1)$. Thus there exists a Schwartz bump function $\psi$ on $\ZR^{n+1}$ (we require $\wh{\psi} \equiv 1$ on $B^{n+1} (0, 100)$) such that $(Ef)^2 = (Ef)^2 * \psi$.

The function $\max_{|\tilde y- \tilde x|\leq e^{100n}} |\psi (\tilde y)|$ is rapidly decaying. We call it $\psi_1 (\tilde x)$. Note also that any $(x, t)$ in $\ZR^{n+1}$ belongs to a unique integral lattice cube whose center we denote by $\tilde m =(m,m_{n+1})= (m_1, \ldots, m_{n+1}) = \tilde m(x, t)$.

Then we have
\begin{equation}\label{eqn1provingL2al}
\begin{split}
&\left\|\sup_{0<t<R} |\eit f|\right\|_{L^2\left(B^n(0,R);d\mu_R\right)}^2\\
= & \int_{B^n(0,R)} \sup_{0 <t < R} |E f(x,t)|^2 d\mu_R(x)\\
\leq & \int_{B^n(0,R)} \sup_{0 <t < R} \left(|Ef|^2*|\psi|\right)(x,t) d\mu_R(x)\\
\leq & \int_{B^n(0,R)} \sup_{0 <t < R} \left(|E f|^2*\psi_1\right)(\tilde m(x, t)) d\mu_R(x)\\
\leq & \sum_{\underset{|m_i|\leq R}{m=(m_1, \ldots, m_n) \in \ZZ^n}, } \left(\int_{|x-m|\leq 10} d\mu_R(x)\right)
\cdot \sup_{\underset{0\leq m_{n+1}\leq R}{m_{n+1} \in \ZZ} } (|E f|^2*\psi_1)(m,m_{n+1}).
\end{split}
\end{equation}

For each $m \in \ZZ^n$, let $b(m)$ be an integer in $[0, R]$ such that
$$
\sup_{\underset{0\leq m_{n+1}\leq R}{m_{n+1} \in \ZZ} } (|E f|^2*\psi_1)(m,m_{n+1}) = (|E f|^2*\psi_1)(m,b(m)).
$$
Also we assume $\|f\|_2 = 1$ so $|\eit f|$ is uniformly bounded pointwisely. For each $m\in\ZZ^n$ we define $$\nu_m := \int_{|x-m|\leq 10} d\mu_R(x) \lesssim 1\,.$$ 
By \eqref{eqn1provingL2al}, we have
\begin{equation}\label{eqn2provingL2al}
\begin{split}
&\left\|\sup_{0<t<R} |\eit f|\right\|_{L^2\left(B^n(0,R);d\mu_R\right)}^2 \\
\lesssim & \sum_{\underset{\nu \in [R^{-100n}, 1]} {\nu \text{ dyadic}} } \sum_{ \underset{\nu_m \sim \nu}{m\in \ZZ^n, |m_i|\leq R}} \nu \cdot (|E f|^2*\psi_1)(m, b(m)) + R^{-90n}.
\end{split}
\end{equation}

For each dyadic $\nu$, denote $A_{\nu} = \{m \in \ZZ^n : |m_i|\leq R, \nu_m \sim \nu\}$. Performing a dyadic pigeonholing over $\nu$ we see that there exists a dyadic $\nu \in [R^{-100n}, 1]$ such that for any small $\varepsilon > 0$,
\begin{equation}\label{eqn3provingL2al}
\begin{split}
&\left\|\sup_{0<t<R} |\eit f|\right\|_{L^2\left(B^n(0,R);d\mu_R\right)}^2\\
\lessapprox & \sum_{m \in A_{\nu}} \nu\cdot (|E f|^2*\psi_1)(m, b(m)) + R^{-89n}\\
\lesssim & \sum_{m \in A_{\nu}} \nu \cdot\left(\int_{B^{n+1} ((m,b(m)), R^{\varepsilon})}|E f|^2\right) + R^{-89n}\\
\lesssim & \nu \cdot \int_{\bigcup_{m \in A_{\nu}} B^{n+1} ((m,b(m)), R^{\varepsilon})}|E f|^2 + R^{-89n}.
\end{split}
\end{equation}

Consider the set $X_{\nu} = \bigcup_{m \in A_{\nu}} B^{n+1} ((m,b(m)), R^{\varepsilon})$. It is a union of a collection of distinct $R^{\varepsilon}$-balls and at the same time, it is also a union of unit balls. These balls' projection onto the $(x_1, \ldots, x_n)$-plane are essentially disjoint (a point can be covered $\lesssim R^{\varepsilon}$ times). For every $r> R^{2\varepsilon}$ by the definition of $A_{\nu}$, the intersection of $X_{\nu}$ and any $r$-ball can be contained in no more than $R^{10n\varepsilon} \nu^{-1} r^{\alpha}$ disjoint $R^{\varepsilon}$-balls. Hence we can apply Corollary \ref{cor-L2X} to $X_{\nu}$ with $\gamma \lesssim R^{100 n \varepsilon} \nu^{-1}$ and $\alpha$. With (\ref{eqn3provingL2al}) this gives
\begin{equation}\label{eqn4provingL2al}
\left\|\sup_{0<t<R} |\eit f|\right\|_{L^2\left(B^n(0,R);d\mu_R\right)}^2 \lessapprox \nu^{\frac{n-1}{n+1}} R^{\frac{\al}{n+1}} \|f\|_2^2 \lesssim R^{\frac{\al}{n+1}} \|f\|_2^2.
\end{equation}
This concludes the proof.
\end{proof}

\section{Main inductive proposition and proof of Theorem \ref{thm-L2X}} \label{sec-pf}
\setcounter{equation}0

To prove Theorem \ref{thm-L2X}, we will use a broad-narrow analysis, which involves inductions. To make everything work we introduce another parameter $K$ and state the theorem in a slightly different way. We say that a collection of quantities are dyadically constant if all the quantities are in the same interval of the form $[2^j,2^{j+1}]$, where $j$ is an integer.
This is our main inductive proposition:

\begin{proposition} \label{thm-main}
Let $n\geq 1$. For any $0<\e<1/100$, there exist constants $C_\e$ and $0<\delta=\delta(\e) \ll \e$ (e.g. $\delta=\e^{100}$) such that the following holds for all $R\geq 1$ and all $f$ with ${\rm supp}\widehat{f}\subset B^n(0,1)$. Let $p=\frac{2(n+1)}{n-1}$ ($p=\infty$ when $n=1$). Suppose that $Y=\bigcup_{k=1}^M B_k$ is a union of lattice $K^2$-cubes in $B^{n+1}(0,R)$ and each lattice $R^{1/2}$-cube intersecting $Y$ contains $\sim \lambda$ many $K^2$-cubes in $Y$, where $K=R^\delta$. Suppose that
$$
\|\eit f\|_{L^p(B_k)} \text{ is dyadically a constant in } k=1,2,\cdots,M.
$$
Let $1\leq\alpha\leq n+1$ and $\gamma$ be given by
\begin{equation}\label{ga}
\gamma:=\max_{ \underset{x'\in \ZR^{n+1},r\geq K^2}{B^{n+1}(x',r)\subset B^{n+1}(0,R)}} \frac{\#\{B_k : B_k \subset B(x',r)\}}{r^\alpha}\,.
\end{equation}
Then
\begin{equation} \label{eq-main}
\|\eit f\|_{L^p(Y)} \leq C_\e M^{-\frac{1}{n+1}}\gamma^{\frac{2}{(n+1)(n+2)}} \lambda^{\frac{n}{(n+1)(n+2)}} R^{\frac{\alpha}{(n+1)(n+2)}+\e}\|f\|_2\,.
\end{equation}
\end{proposition}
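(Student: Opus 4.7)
The plan is to prove Proposition~\ref{thm-main} by induction on $R$, performing at each scale a broad--narrow dichotomy at the intermediate scale $K^2 = R^{2\delta}$, in the spirit of Bourgain--Guth \cite{BG} and the subsequent refinements in \cite{jB12, BD, G2}. The base case $R = O(1)$ is trivial. For the inductive step I would cover the Fourier ball $B^n(0,1)$ by $K^{-1}$-caps $\tau$, write $f = \sum_\tau f_\tau$, and on each lattice $K^2$-cube $B_k \subset Y$ apply the standard pointwise split: either
\begin{equation*}
|\eit f(x)|^p \lesssim K^{O(1)} \max_{\tau_1,\ldots,\tau_n \text{ transverse}} \prod_{i=1}^{n} |\eit f_{\tau_i}(x)|^{p/n}
\end{equation*}
(broad case), or $|\eit f(x)|^p \lesssim K^{O(1)} \max_{V} |\eit(\sum_{\tau \subset V} f_\tau)(x)|^p$, where $V$ ranges over the $K^{-1}$-neighborhoods of codimension-one affine subspaces (narrow case). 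A $\log R$-loss pigeonholing then reduces to the case that one of these two scenarios holds uniformly on $Y$.

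In the broad case, I would invoke the multilinear refined Strichartz estimate of \cite{DGLZ}. The hypotheses of that theorem match ours: $Y$ is a union of $M$ lattice $K^2$-cubes on which $\|\eit f\|_{L^p(B_k)}$ is dyadically constant, the $R^{1/2}$-refinement parameter is $\lambda$, and the fractal bound (\ref{ga}) with parameters $\alpha,\gamma$ holds. The output of \cite{DGLZ}, applied to any quantitatively transverse $n$-tuple of caps, is precisely the target factor
\begin{equation*}
M^{-\frac{1}{n+1}} \gamma^{\frac{2}{(n+1)(n+2)}} \lambda^{\frac{n}{(n+1)(n+2)}} R^{\frac{\alpha}{(n+1)(n+2)}+\e/2}\|f\|_2,
\end{equation*}
with an $R^{\e/2}$ slack that comfortably absorbs the $K^{O(1)} = R^{O(\delta)}$ loss from the dichotomy provided $\delta \ll \e$.

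In the narrow case, I would apply the Bourgain--Demeter $l^2$ decoupling theorem \cite{BD} for the $(n-1)$-dimensional paraboloid sitting inside each slab $V$: the exponent $p = \frac{2(n+1)}{n-1}$ is exactly the decoupling-critical exponent in dimension $n-1$, so decoupling costs only $K^{O(\e)}$. After decoupling and pigeonholing to a dominant slab $V$, I would carry out the standard parabolic rescaling that maps the frequency slab $V$ to a full $B^n(0,1)$: the ambient $R$-ball becomes an $R/K^2$-ellipsoid which isotropizes to a ball of that radius, the lattice $K^2$-cubes become lattice unit cubes, and the parameters $(M, \lambda, \gamma, \alpha)$ transform in a controlled way. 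A short dyadic pigeonholing restores the dyadic-constancy hypothesis at the new intermediate scale $(R/K^2)^{2\delta}$, after which the inductive hypothesis at scale $R/K^2$ controls each rescaled piece. Recombining in $l^2$ over slabs and in $l^p$ via Minkowski (valid since $p \geq 2$) then yields the narrow-case estimate.

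The main obstacle will be closing the induction on scales. The narrow bound comes with an inductive $(R/K^2)^{\e}$ factor together with the $K^{O(\e)}$ decoupling loss and the $K^{O(1)}$ dichotomy prefactor; for the narrow estimate to be strictly dominated by, say, half of the right-hand side of (\ref{eq-main}), one needs the combined loss to stay below $R^{\e}$, which forces $\delta \ll \e$ and is precisely the reason for the choice $\delta = \e^{100}$. The remaining fiddly point is bookkeeping: under parabolic rescaling and pigeonholing, the transformation rules for $M$, $\lambda$, $\gamma$, and $\alpha$ have to be compatible with the exponents $\frac{2}{(n+1)(n+2)}$, $\frac{n}{(n+1)(n+2)}$, and $\frac{\alpha}{(n+1)(n+2)}$ appearing in the target bound. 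This is essentially the same scale-induction bookkeeping carried out for the linear refined Strichartz estimate in \cite{DGL, DGLZ}; the way Proposition~\ref{thm-main} is formulated (in particular, the uniform dyadic constancy of $\|\eit f\|_{L^p(B_k)}$ and the $R^{1/2}$-scale parameter $\lambda$) is tailored precisely so that these rescalings go through cleanly.
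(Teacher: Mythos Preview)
Your architecture (induction on $R$, broad--narrow split at scale $K^2$, multilinear refined Strichartz for the broad part, decoupling plus rescaling for the narrow part) matches the paper's. But two steps, as written, would not go through.

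\textbf{The narrow induction cannot absorb a $K^{O(1)}$ loss.} You say the narrow bound carries ``the $K^{O(1)}$ dichotomy prefactor'' and that choosing $\delta \ll \e$ saves the day. It does not: the only gain from passing to scale $R/K^2$ is $(R/K^2)^\e/R^\e = K^{-2\e}$, and $K^{O(1)}\cdot K^{-2\e}\gg 1$ regardless of how small $\delta$ is. The paper incurs \emph{no} $K^{O(1)}$ loss in the narrow case. For a narrow $K^2$-cube there is a \emph{single} hyperplane $V$ carrying all significant caps, so there is no pigeonholing over slabs; one applies lower-dimensional $l^2$ decoupling (Lemma~\ref{lem-nr-dec}) at cost $K^{\e^4}$, decoupling directly into individual $K^{-1}$-caps $\tau$. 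Your sentence ``parabolic rescaling that maps the frequency slab $V$ to a full $B^n(0,1)$'' is the wrong picture: a codimension-one slab does not rescale to a ball under isotropic parabolic dilation --- individual caps do. The paper localizes further to boxes $\Box_{\tau,D}$, rescales each $\Box$ to an $R/K^2$-ball, and applies the inductive hypothesis there. The ``fiddly bookkeeping'' you defer is the actual content: one must introduce auxiliary parameters $\eta,\mu,M_1,\lambda_1,\gamma_1$ by pigeonholing, then relate them back to $M,\lambda,\gamma$ via the three inequalities \eqref{muB}, \eqref{eta}, \eqref{la-la1}, and verify that the exponents in \eqref{eq-main} are exactly reproduced. (Incidentally, $p=\frac{2(n+1)}{n-1}$ is the critical exponent for the $n$-dimensional paraboloid, not the $(n-1)$-dimensional one; for the latter $p$ is subcritical, which is why the narrow decoupling is cheap.)

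\textbf{The broad case does not follow by quoting \cite{DGLZ}.} The multilinear refined Strichartz estimate (Theorem~\ref{multstr}) lives at exponent $q=\frac{2(n+2)}{n}\neq p$, is formulated for $R^{1/2}$-cubes rather than $K^2$-cubes, and outputs $N^{-\frac{n}{(n+1)(n+2)}}$ where $N$ is the number of $R^{1/2}$-cubes; it says nothing about $\gamma$ or $\alpha$. The paper first uses dyadic constancy and the locally constant property to pass from $L^p$ to $L^q$ by reverse H\"older (cost $M^{1/p-1/q}$), then pigeonholes so that Theorem~\ref{multstr} applies with $N\gtrsim K^{-C}M/\lambda$, yielding the bound $K^{O(1)}M^{-\frac{1}{n+2}}\lambda^{\frac{n}{(n+1)(n+2)}}\|f\|_2$. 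The factors $\gamma$ and $R^\alpha$ enter only at the last step, via the trivial inequalities $M\leq \gamma R^\alpha$ and $\gamma\geq K^{-2\alpha}$. Here the $K^{O(1)}$ loss is harmless because the broad case is not inductive. Also, transversality in $\ZR^{n+1}$ requires $(n{+}1)$-tuples of caps, not $n$-tuples.
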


Theorem \ref{thm-L2X} follows from Proposition \ref{thm-main} by a dyadic pigeonholing argument:

\begin{proof}[Proof of (Proposition \ref{thm-main} $\implies$ Theorem \ref{thm-L2X})] 
Given $X=\bigcup_{k} B_k$, a union of lattice unit cubes satisfying the assumptions of Theorem \ref{thm-L2X}, we sort these unit cubes $B_k$ according to the value of $\|\eit f\|_{L^p(B_k)}$. Assuming $\|f\|_2=1$, there are only $O(\log R)$ significant dyadic choices for this value. Therefore we can choose $X'\subset X$, a union of unit cubes $B$, such that 
$$
\left\{\|\eit f\|_{L^p(B)}: B\in X'\right\} \text{ are dyadically constant}
$$
and 
$$
\|\eit f\|_{L^2(X)} \lessapprox \|\eit f\|_{L^2(X')}\,.
$$
Let $M$ be the total number of unit cubes $B$ in $X'$.
Since $f$ has Fourier support in the unit ball, by locally constant property, $|\eit f|$ is essentially constant on unit balls. Therefore, the estimate \eqref{eq-L2X} is equivalent to
\begin{equation} \label{eq-LpX}
\|\eit f\|_{L^p(X')} \lessapprox  M^{-\frac{1}{n+1}}\gamma^{\frac{2}{(n+1)(n+2)}} \lambda^{\frac{n}{(n+1)(n+2)}} R^{\frac{\alpha}{(n+1)(n+2)}}\|f\|_2\,,
\end{equation}
where $p=\frac{2(n+1)}{n-1}$, and $\gamma,\lambda$ are as in the assumptions of Theorem \ref{thm-L2X}.

We further sort the unit cubes $B$ in $X'$ as follows:
\begin{enumerate}

\item Let $\beta$ be a dyadic number, and $\cb_\beta$ a sub-collection of the unit cubes in $X'$ such that for each $B$ in $\cb_\beta$, the lattice $K^2$-cube $\tilde B$ containing $B$ satisfies
$$
\|\eit f\|_{L^p(\tilde B)} \sim \beta\,.
$$
Denote the collection of relevant $K^2$-cubes by $\tilde\cb_\beta$.

\item Fix $\beta$. Let $\lambda'$ be a dyadic number and $\cb_{\beta,\lambda'}$ a sub-collection of $\cb_{\beta}$ such that for each $B\in \cb_{\beta,\lambda'}$, the lattice $R^{1/2}$-cube $Q$ containing $B$ contains $\sim \lambda'$ many $K^2$-cubes from $\tilde\cb_\beta$. Denote the collection of relevant $K^2$-cubes by $\tilde\cb_{\beta,\lambda'}$.

\end{enumerate}

Since there are only $O(\log R)$ many significant choices for all dyadic numbers $\beta,\lambda'$, we can choose some $\beta$ and $\lambda'$ so that $\#\cb_{\beta,\lambda'} \gtrapprox M$. Then it follows easily by definition that
$$
M':=\#\tilde \cb_{\beta,\lambda'} \gtrapprox M, \quad \lambda'\leq \lambda\,,
$$
and
$$
\gamma':= \max_{ \underset{x'\in \ZR^{n+1},r\geq K^2}{B^{n+1}(x',r)\subset B^{n+1}(0,R)}} \frac{\#\{\tilde B \in \tilde\cb_{\beta,\lambda'} :\tilde B \subset B(x',r)\}}{r^\alpha} \leq \gamma\,.$$

Applying Proposition \ref{thm-main} to $\|\eit f\|_{L^p(Y)}$ with $Y=\bigcup_{\tilde B \in \tilde\cb_{\beta,\lambda'}} \tilde B$ and parameters $M',\gamma',\lambda'$, we get
$$
\|\eit f\|_{L^p(X)} \lessapprox \|\eit f\|_{L^p(Y)} \lessapprox 
M^{-\frac{1}{n+1}}\gamma^{\frac{2}{(n+1)(n+2)}} \lambda^{\frac{n}{(n+1)(n+2)}} R^{\frac{\alpha}{(n+1)(n+2)}}\|f\|_2\,,
$$
as desired.

\end{proof}

The rest of this section is devoted to a proof of Proposition \ref{thm-main}. Note that when the radius $R$ is $\lesssim 1$, the estimate \eqref{eq-main} is trivial. So we can assume that $R$ is sufficiently large compared to any constant depending on $\e$. We will induct on radius $R$ in our proof.

In the proof, we will sometimes have paragraphs starting with \textbf{Intuition}. We hope that these will help the readers understand what we do next.

\begin{intu}
For our union $Y$ of $K^2$-cubes, we want to use decoupling theory on each $K^2$-cube. This will relate the whole $\eit f$ to its contributions $\eit f_{\tau}$ from various $1/K$-caps $\tau$ in the frequency space. Instead of doing decoupling in dimension $n+1$, we are going to do a broad-narrow analysis following Bourgain-Guth \cite{BG}, Bourgain \cite{jB12}, Bourgain-Demeter \cite{BD} and Guth \cite{G2}: for each $K^2$-cube, one of the following two has to happen:

(i) It is broad in the sense that there are $n+1$ contributing caps that are transversal. In this case the function is controlled by multilinear estimates which are usually strong enough. 

(ii) It is narrow (i.e. not broad). In this case all the contributing caps have normal directions close to a hyperplane, which enables us to use decoupling in dimension $n$.

Either way we get better estimates than a direct $(n+1)$-dimensional decoupling. We control the broad part directly, and do an induction on the narrow part. Our induction has its roots in the proof of the refined Strichartz estimate in \cite{DGL, DGLZ}.
\end{intu}

Throughout this section we fix $p=\frac{2(n+1)}{n-1}$.
In the frequency space we decompose $B^n(0,1)$ into disjoint $K^{-1}$-cubes $\tau$. Denote the set of $K^{-1}$-cubes $\tau$ by $\cs$.
For a function $f$ with ${\rm supp}\widehat{f}\subset B^n(0,1)$ we have $f=\sum_\tau f_\tau$, where $\wh{f_\tau}$ is $\wh f$ restricted to $\tau$. Given a $K^2$-cube $B$, we define its \textbf{significant} set as
$$
\cs(B):=\left\{\tau \in \cs : \|\eit f_\tau\|_{L^p(B)} \geq \frac{1}{100(\#\cs)}\|\eit f\|_{L^p(B)}\right\}\,.
$$
Note that due to the triangle inequality
$$
\big\|\sum_{\tau\in \cs(B)}\eit f_\tau\big\|_{L^p(B)} \sim \|\eit f\|_{L^p(B)}\,.
$$
We say that a $K^2$-cube $B$ is \textbf{narrow} if there is an $n$-dimensional subspace $V$ such that for all $\tau \in \cs(B)$
$$
{\rm Angle}(G(\tau),V) \leq \frac{1}{100nK}\,,
$$
where $G(\tau)\subset S^n$ is a spherical cap of radius $\sim K^{-1}$ given by
$$
G(\tau):=\left\{\frac{(-2\xi,1)}{|(-2\xi,1)|}\in S^n:\xi\in\tau\right\}\,,
$$
and ${\rm Angle}(G(\tau),V)$ denotes the smallest angle between any non-zero vector $v\in V$ and $v'\in G(\tau)$. Otherwise we say the $K^2$-cube $B$ is \textbf{broad}. It follows from this definition that for any broad $B$, there exist $\tau_1,\cdots \tau_{n+1} \in \cs(B)$ such that for any $v_j \in G(\tau_j)$
\begin{equation} \label{eq-trans}
|v_1 \wedge v_2\wedge \cdots \wedge v_{n+1}| \gtrsim K^{-n}\,.
\end{equation}
Denote the union of broad $K^2$-cubes $B_k$ in $Y$ by $Y_{\rm broad}$, and the union of narrow $K^2$-cubes $B_k$ in $Y$ by $Y_{\rm narrow}$. We call it the broad case if $Y_{\rm broad}$ contains $\geq M/2$ many $K^2$-cubes, and the narrow case otherwise. We will deal with the broad case in Subsection \ref{sec-br} using the multilinear refined Strichartz estimate from \cite{DGLZ}. And we handle the narrow case in Subsection \ref{sec-nr} by an inductive argument via the Bourgain-Demeter $l^2$ decoupling theorem \cite{BD} and induction on scales.

\subsection{Broad case} \label{sec-br}
Recall that $K=R^\delta$. A key tool we are using in the broad case is the following multilinear refined Strichartz estimate from \cite{DGLZ}, which is proved using $l^2$ decoupling, induction on scales and multilinear Kakeya estimates (see \cite{BCT,G}).

\begin{theorem} [c.f. Theorem 4.2 in \cite{DGLZ}] \label{multstr}
Let $q=\frac{2(n+2)}{n}$. Let $f$ be a function with Fourier support in $B^n(0,1)$. Suppose that $\tau_1,\cdots,\tau_{n+1} \in \cs$ and \eqref{eq-trans} holds for any $v_j\in G(\tau_j)$. Suppose that $Q_1, Q_2,\cdots, Q_N$ are lattice  $R^{1/2}$-cubes in $B_R^{n+1}$, so that
$$
\| \eit f_{\tau_i} \|_{L^{q}(Q_j)}  \textrm{ is dyadically a constant in $j$, for each $i=1,2,\cdots,n+1$}. 
$$
Let $Y$ denote $\bigcup_{j=1}^N Q_j$.  Then
for any $\eps > 0$, 
\begin{equation} \label{kRS}
\left\| \prod_{i=1}^{n+1} \left|\eit f_{\tau_i}\right|^{\frac{1}{n+1}} \right\|_{L^{q}(Y)} \le C_\e R^\e N^{-\frac{n}{(n+1)(n+2)}} \|f\|_2\,. 
\end{equation}
\end{theorem}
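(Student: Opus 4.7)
The plan is to induct on the radius $R$, proving the estimate with constant $C_\eps R^\eps$. The base case $R = O_\eps(1)$ is trivial since $\eit f$ is bounded pointwise by $\|f\|_2$ (up to constants) when $f$ has Fourier support in $B^n(0,1)$. For the inductive step, we combine a wave packet decomposition, multilinear Kakeya (to exploit the transversality \eqref{eq-trans}), and Bourgain-Demeter $\ell^2$ decoupling, following the template of the refined Strichartz estimate developed by Du-Guth-Li \cite{DGL}.

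For each $i = 1, \ldots, n+1$, partition $\tau_i$ into $R^{-1/2}$-subcaps $\theta$ and perform the associated wave packet decomposition $\eit f_{\tau_i} = \sum_{T \in \mathbb{T}_{\tau_i}} W_T$, where each wave packet $W_T$ is essentially supported on a tube of dimensions $R^{1/2} \times \cdots \times R^{1/2} \times R$ in spacetime whose direction lies in $G(\theta) \subset G(\tau_i)$. The transversality condition \eqref{eq-trans} forces the $n+1$ families $\mathbb{T}_{\tau_1}, \ldots, \mathbb{T}_{\tau_{n+1}}$ to be quantitatively transversal (pairwise angles $\gtrsim K^{-1}$). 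By standard dyadic pigeonholing, after passing to subcollections we may assume every retained tube carries essentially the same $L^2$ weight, and that the tube count $\mu_i(j) := \#\mathbb{T}_{\tau_i}(Q_j)$ is dyadically constant in $j$ for each $i$, while controlling $\|\eit f_{\tau_i}\|_{L^q(Q_j)}$ from below by the contribution of tubes through $Q_j$.

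Next, apply the multilinear Kakeya inequality of Bennett-Carbery-Tao (endpoint form due to Guth) to the $n+1$ transversal tube families; on the union $Y = \bigcup_j Q_j$ this yields the key geometric bound
\[
\sum_{j=1}^{N} \prod_{i=1}^{n+1} \mu_i(j)^{\frac{1}{n}} \lesssim R^\eps \prod_{i=1}^{n+1} (\#\mathbb{T}_{\tau_i})^{\frac{1}{n}},
\]
which captures that transversal tube families cannot cluster too much on the $R^{1/2}$-cubes. In parallel, apply the Bourgain-Demeter $\ell^2$ decoupling theorem at an appropriate intermediate scale to each $\eit f_{\tau_i}$, and invoke parabolic rescaling on each resulting subcap to reduce the problem to an instance of the same theorem at a strictly smaller physical scale; by the induction hypothesis this contributes a loss strictly smaller than $R^\eps$. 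Recovering $\|f\|_2$ from the pigeonholed wave packet data by $L^2$-orthogonality closes the loop.

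Putting the multilinear Kakeya geometric estimate together with the inductive decoupling control on each $Q_j$, and tracking the critical exponent $q = 2(n+2)/n$ through the H\"older exponents in the product $\prod_i |\cdot|^{1/(n+1)}$, produces the advertised savings $N^{-n/((n+1)(n+2))}$ with total loss $R^\eps$. The main obstacle is the exponent bookkeeping: the gain $\tfrac{n}{(n+1)(n+2)}$ is the unique exponent at which the $1/n$-powered multilinear Kakeya saving and the $\ell^2$ decoupling loss balance against $q$, so one must verify that $\delta = \delta(\eps)$ can be chosen small enough that the accumulated $K^{O(1)}$ losses from decoupling and Kakeya are absorbed into $R^\eps$ and the induction closes with precisely the stated constant.
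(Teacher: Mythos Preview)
The paper does not prove this theorem; it is quoted from \cite{DGLZ} and only the ingredients are named (``proved using $l^2$ decoupling, induction on scales and multilinear Kakeya estimates''). Your sketch names the same three ingredients, so at that level it matches. However, the way you assemble them is not the structure in \cite{DGLZ}, and as written it has a genuine gap.

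The problem is your induction scheme. You propose to decouple each $\eit f_{\tau_i}$ into subcaps, parabolically rescale, and land back in ``an instance of the same theorem at a strictly smaller physical scale.'' But parabolic rescaling is tied to a single cap center: the change of variables \eqref{coord} that turns a $1/K$-cap into the unit cap is different for each $\tau_i$, so you cannot simultaneously rescale all $n+1$ factors and recover a multilinear statement of the same form. After rescaling $\tau_1$, the other $\tau_i$ are sent to caps of the wrong size and location, and the hypothesis ``$\tau_1,\dots,\tau_{n+1}\in\cs$ with \eqref{eq-trans}'' is not reproduced at the new scale. So the induction does not close on the multilinear statement itself.

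In \cite{DGLZ} the architecture is different: one first proves the \emph{linear} refined Strichartz estimate (for a single $f$) by decoupling and induction on scales, and then applies multilinear Kakeya \emph{once}, at the top scale, to combine the $n+1$ linear estimates and extract the gain $N^{-\frac{n}{(n+1)(n+2)}}$. The dyadic-constant hypothesis on $\|\eit f_{\tau_i}\|_{L^q(Q_j)}$ lets you write the left side of \eqref{kRS} as $N^{1/q}\prod_i A_i^{1/(n+1)}$; linear refined Strichartz controls each $A_i$ in terms of a tube-multiplicity $\mu_i$ and $\|f_{\tau_i}\|_2$; and a single application of multilinear Kakeya bounds $N\prod_i\mu_i^{1/n}$ by $\prod_i(\#\ZT_{\tau_i})^{1/n}$, which is what converts the tube counts back into $\|f\|_2$. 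Your ``exponent bookkeeping'' paragraph gestures at this balance but never carries it out, and without the linear refined Strichartz as an intermediate step there is no clean way to do so.
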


Throughout the remainder of this subsection we will prove Proposition \ref{thm-main} in the broad case. In the broad case, there are $\sim M$ many broad $K^2$-cubes $B$. Denote the collection of $(n+1)$-tuple of transverse caps by $\Gamma$:
$$
\Gamma:=\left\{\tilde\tau=(\tau_1,\cdots,\tau_{n+1}):\tau_j\in \cs\text{ and } \eqref{eq-trans} \text{ holds for any } v_j\in G(\tau_j)\right\}\,.
$$
Then for each broad $B$, 
\begin{equation} \label{eq-br}
\left\|\eit f\right\|^p_{L^p(B)} \leq K^{O(1)} \prod_{j=1}^{n+1}\left(\int_B \big|\eit f_{\tau_j}\big|^p\right)^{\frac{1}{n+1}}\,,
\end{equation}
for some $\tilde\tau=(\tau_1,\cdots,\tau_{n+1})\in \Gamma$. In order to exploit the transversality, we want to bound the above geometric average of integrals by an integral of geometric average up to a loss of $K^{O(1)}$. We can do this by using translations and locally constant property. Given a $K^2$-cube $B$, denote its center by $x_B$.
We break $B$ into finitely overlapping balls of the form $B(x_B+v,2)$, where $v\in B(0,K^2)\cap \ZZ^{n+1}$. For each $\tau_j$, we can view $|\eit f_{\tau_j}|$ essentially as constant on each $B(x_B+v,2)$. Choose $v_j\in B(0,K^2)\cap \ZZ^{n+1}$ such that $\|\eit f_{\tau_j}\|_{L^\infty(B)}$ is attained in $B(x_B+v_j,2)$. Denote $v_j=(x_j,t_j)$ and define $f_{\tau_j,v_j}$ by
$$
\widehat{f_{\tau_j,v_j}} (\xi) := \widehat{f_{\tau_j}} (\xi) e^{i(x_j\cdot \xi +t_j|\xi|^2)}\,.
$$
Then $\eit f_{\tau_j,v_j} (x) = e^{i(t+t_j)\Delta} f_{\tau_j} (x+x_j)$ and $|\eit f_{\tau_j,v_j} (x)|$ attains $\|\eit f_{\tau_j}\|_{L^\infty(B)}$ in $B(x_B,2)$. Therefore
\begin{equation} \label{eq-rt}
\int_B \big|\eit f_{\tau_j}\big|^p \leq K^{O(1)} \int_{B(x_B,2)} \big|\eit f_{\tau_j,v_j}\big|^p\,.
\end{equation}
Now for each broad $B$, we find some $\tilde\tau=(\tau_1,\cdots,\tau_{n+1})\in \Gamma$ and $\tilde v =(v_1,\cdots,v_{n+1})$ such that 
\begin{equation} \label{eq-lcst}
\begin{split}
\left\|\eit f\right\|^p_{L^p(B)} \leq &K^{O(1)} \prod_{j=1}^{n+1}\left(\int_{B(x_B,2)} \big|\eit f_{\tau_j,v_j}\big|^p\right)^{\frac{1}{n+1}}\\
\leq & K^{O(1)} \int_{B(x_B,2)} \prod_{j=1}^{n+1}\left|\eit f_{\tau_j,v_j}\right|^{\frac{p}{n+1}}\,.
\end{split}
\end{equation}
Since there are only $K^{O(1)}$ choices for $\tilde \tau$ and $\tilde v$, we can choose some $\tilde \tau$ and $\tilde v$ such that \eqref{eq-lcst} holds for at least $K^{-C} M$ broad balls $B$. From now on, fix $\tilde \tau$ and $\tilde v$, and let $f_j$ denote $f_{\tau_j,v_j}$. Next we further sort the collection $\cb$ of remaining broad balls as follows:
\begin{enumerate}

\item For a dyadic number $A$, let $\cb_A$ be a sub-collection of $\cb$ in which for each $B$ we have 
$$\left\|\prod_{j=1}^{n+1}\left|\eit f_j\right|^{\frac{1}{n+1}}\right\|_{L^\infty(B(x_B,2))} \sim A\,.$$

\item Fix $A$, for dyadic numbers $\tilde \lambda,\iota_1,\cdots,\iota_{n+1}$, let $\cb_{A,\tilde \lambda,\iota_1,\cdots,\iota_{n+1}}$ be a sub-collection of $\cb_A$ in which for each $B$, the $R^{1/2}$-cube $Q$ containing $B$ contains $\sim \tilde \lambda$ cubes from $\cb_A$ and 
$$
\|\eit f_j\|_{L^q(Q)} \sim \iota_j, \quad j=1,2,\cdots,n+1\,.
$$
Here $q=\frac{2(n+2)}{n}$.
\end{enumerate}
Recall that $p>q$, where $p=\frac{2(n+1)}{n-1}$ is the sharp exponent for decoupling in dimension $n$, and $q=\frac{2(n+2)}{n}$ is the exponent for which the multilinear refined Strichartz estimate in demension $n+1$ holds.
The first dyadic pigeonholing together with the locally constant property enables us to dominate $L^p$-norm by $L^q$-norm using reverse H\"older. The second dyadic pigeonholing allows us to apply the multilinear refined Strichartz estimate to control the $L^q$-norm.

We can assume that $\|f\|_2=1$. Then all the above dyadic numbers making significant contributions can be assumed to be between $R^{-C}$ and $R^{C}$ for a large constant $C$. Therefore, there exist some dyadic numbers $A,\tilde \lambda,\iota_1,\cdots,\iota_{n+1}$ such that $\cb_{A,\tilde \lambda,\iota_1,\cdots,\iota_{n+1}}$ contains $\geq K^{-C} M$ many cubes $B$. Fix a choice of $A,\tilde \lambda,\iota_1,\cdots,\iota_{n+1}$ and denote $\cb_{A,\tilde \lambda,\iota_1,\cdots,\iota_{n+1}}$ by $\cb$ for convenience (a mild abuse of notation). Then, in the broad case, it follows from \eqref{eq-lcst} and our choice of $A$ that
\begin{equation} \label{all'}
\begin{split}
\|\eit f\|_{L^p(Y)} \leq &K^{O(1)} \left\| \prod_{j=1}^{n+1}\left|\eit f_j\right|^{\frac{1}{n+1}} \right\|_{L^p(\cup_{B\in \cb}B(x_B,2))}\\
\leq &K^{O(1)} M^{\frac 1p -\frac 1 q} \left\| \prod_{j=1}^{n+1}\left|\eit f_j\right|^{\frac{1}{n+1}} \right\|_{L^q(\cup_{B\in \cb}B(x_B,2))}\\
\leq &K^{O(1)} M^{-\frac {1}{(n+1)(n+2)}} \left\| \prod_{j=1}^{n+1}\left|\eit f_j\right|^{\frac{1}{n+1}} \right\|_{L^q(\cup_{Q\in \cq} Q)}\,,
\end{split}
\end{equation}
where $\cq$ is the collection of relevant $R^{1/2}$-cubes $Q$ when we define $\cb$. Note that 
$$
(\#\cq) \lambda  \geq (\#\cq) \tilde \lambda \sim \#\cb \geq K^{-C}M\,,
$$
so
\begin{equation}
\tilde N :=\#\cq \geq K^{-C}\frac{M}{\lambda}\,.
\end{equation}
Applying Theorem \ref{multstr}, we get
$$
\left\| \prod_{j=1}^{n+1}\left|\eit f_j\right|^{\frac{1}{n+1}} \right\|_{L^q(\cup_{Q\in \cq} Q)}
\leq K^{O(1)} \left(\frac M\lambda\right)^{-\frac{n}{(n+1)(n+2)}}
\|f\|_2\,,$$
and therefore by \eqref{all'},
$$
\|\eit f\|_{L^p(Y)} \leq K^{O(1)} M^{-\frac{1}{n+2}} \lambda^{\frac{n}{(n+1)(n+2)}}
\|f\|_2\,.$$
Note that $$M^{-\frac{1}{n+2}} \lambda^{\frac{n}{(n+1)(n+2)}} \leq K^{O(1)} M^{-\frac{1}{n+1}}\gamma^{\frac{2}{(n+1)(n+2)}} \lambda^{\frac{n}{(n+1)(n+2)}} R^{\frac{\alpha}{(n+1)(n+2)}}$$
holds if and only if $M\leq K^{O(1)}\gamma^2R^\alpha$. Indeed, by definition \eqref{ga} of $\gamma$, we have $M\leq \gamma R^\alpha$ and $\gamma \geq K^{-2\alpha}$. So the broad case is done.

\subsection{Narrow case} \label{sec-nr}

For each narrow ball, we have the following lemma which is a consequence of the $l^2$ decoupling theorem in dimension $n$ and Minkowski's inequality. This argument is essentially contained in Bourgain-Demeter's proof of the $l^2$ decoupling conjecture and we omit the details (see the proof of Proposition 5.5 in \cite{BD}). 

\begin{lemma}\label{lem-nr-dec}
Suppose that $B$ is a narrow $K^2$-cube in $\ZR^{n+1}$. Then for any $\e>0$,
$$
\|\eit f\|_{L^p(B)} \leq C_\e K^{\e} \left(\sum_{\tau\in \cs}\left\|\eit f_\tau\right\|_{L^p(\omega_B)}^2\right)^{1/2} \,,
$$
here $p=\frac{2(n+1)}{n-1}$, $\cs$ denotes the set of $K^{-1}$-cubes which tile $B^n(0,1)$, and $\omega_B$ is a weight function which is essentially a characteristic function on $B$. More precisely, $\omega_B$ has Fourier support in $B(0,K^{-2})$ and satisfies
$$
1_B(\tilde x) \lesssim \omega_B(\tilde x) \leq \left(1+\frac{|\tilde x - C(B)|}{K^2}\right)^{-1000n}.
$$
\end{lemma}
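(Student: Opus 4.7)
The plan is to exploit the narrowness hypothesis to reduce the decoupling problem on the $n$-dimensional Schr\"odinger paraboloid in $\ZR^{n+1}$ to one on an $(n-1)$-dimensional paraboloid living inside $\ZR^n$, and then invoke the $l^2$ decoupling theorem of Bourgain--Demeter \cite{BD}. The target exponent $p=\frac{2(n+1)}{n-1}$ is precisely the sharp decoupling exponent for the paraboloid in $n$ ambient dimensions, which is exactly why decoupling ``one dimension down'' produces the estimate claimed in the lemma.

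First I would restrict attention to the significant caps $\tau\in\cs(B)$: by definition of $\cs(B)$, the remaining caps contribute at most a constant fraction of $\|\eit f\|_{L^p(B)}$ and can be absorbed on the left. The narrowness assumption now guarantees that every $G(\tau)$ with $\tau\in\cs(B)$ lies within angle $\frac{1}{100nK}$ of a single $n$-dimensional subspace $V\subset\ZR^{n+1}$. After a rotation of the spatial coordinates we may assume $V$ is orthogonal to a fixed coordinate direction, in which case the corresponding frequencies $\xi$ are concentrated in a $K^{-1}$-neighborhood of a single affine hyperplane $H\subset\ZR^n$.

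Next I would group the relevant $K^{-1}$-caps $\tau$ into $K^{-1}$-thick slabs parallel to $H$. Within each such slab, the graph $\{(\xi,|\xi|^2):\xi\in\tau\}$ differs from a cylinder over an $(n-1)$-dimensional paraboloid by only $O(K^{-2})$ in the transverse direction, and on a $K^2$-cube the uncertainty principle renders such perturbations invisible. Applying the cylindrical version of the Bourgain--Demeter $l^2$ decoupling theorem in dimension $n$ at scale $K^2$ with sharp exponent $p=\frac{2(n+1)}{n-1}$ then produces the factor $C_\e K^\e$ on the right. The smooth weight $\omega_B$, whose Fourier transform is supported in $B(0,K^{-2})$, appears naturally in this step: it encodes the Schwartz tail of the restriction to $B$ via the locally-constant property at scale $K^2$, and Minkowski's inequality is used to bring the $L^p$-norm inside the $\ell^2$-sum.

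The principal technical obstacle will be making the cylindrical reduction rigorous: one must verify that, after rotation, the difference between the Schr\"odinger paraboloid and a cylinder over an $(n-1)$-dimensional paraboloid is small enough on the scale $K^2$ to be swallowed by $\omega_B$, and that this approximation is compatible with the partition into $K^{-1}$-caps. This is precisely the content of Proposition~5.5 of \cite{BD}, whose argument transfers essentially verbatim once the narrowness quantification above is in hand, which is why the lemma is stated with a reference rather than an independent proof.
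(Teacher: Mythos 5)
Your proposal correctly reconstructs the argument the paper only alludes to by citing Proposition~5.5 of Bourgain--Demeter \cite{BD}: restrict to the significant caps $\cs(B)$, use the narrowness hypothesis to place the contributing frequencies $\xi$ in a $K^{-1}$-slab (after a rotation of $\xi\in\ZR^n$), flatten the paraboloid over that slab to a cylinder over an $(n-1)$-dimensional paraboloid up to an $O(K^{-2})$ error invisible at scale $K^2$, and then apply cylindrical $\ell^2$ decoupling in dimension $n$ at exponent $p=\tfrac{2(n+1)}{n-1}$ together with Minkowski and the weight $\omega_B$. This is exactly the paper's intended (cited) proof, so the approaches coincide.
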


For each $\tau \in \cs$, we will deal with $\eit f_\tau$ by parabolic rescaling and induction on radius. In order to do so, we need to further decompose $f$ in physical space and perform dyadic pigeonholing several times to get the right setup for our inductive hypothesis at scale $R_1:=R/K^2$ after rescaling. 

\begin{intu}
For each $1/K$-cap $\tau$, all wave packets associated with $f_\tau$ through a given point have to lie in a common box that has one side length $R$ and other side lengths $R/K$. Every single box of this type will become an $R/K^2$-ball if we perform a parabolic rescaling to transform $\tau$ into the standard $1$-cap. We want to use the inductive hypothesis for radius $R/K^2$ in an efficient way. A few dyadic pigeonholing steps will be needed.
\end{intu}

First, we break the physical ball $B^n(0,R)$ into $R/K$-cubes $D$. For each pair $(\tau, D)$, let $f_{\Box_{\tau, D}}$ be the function formed by cutting off $f$ on the cube $D$ (with a Schwartz tail) in physical space and the cube $\tau$ in Fourier space. Note that $e^{it\Delta} f_{\Box_{\tau,D}}$, restricted to $B^{n+1}(R)$, is essentially supported on an $R/K \times \cdots \times R/K \times R$-box\footnote[3]{In reality, our boxes will have edge length slightly larger, say being larger by $K^{\varepsilon^{100}}$ times. See e.g. the wave packet decomposition theorem in \cite{G1}. This would not hurt us in any way and we omit this technicality for reading convenience.}, which we denote by $\Box_{\tau, D}$. The box $\Box_{\tau, D}$ is in the direction given by $(-2c(\tau),1)$ and intersects ${t=0}$ at the cube $D$, where $c(\tau)$ is the center of $\tau$. For a fixed $\tau$, the different boxes $\Box_{\tau, D}$ tile $B^{n+1}(0,R)$.  In particular, for each $\tau$, a given $K^2$-cube $B$ lies in exactly one box $\Box_{\tau, D}$. We write $f=\sum_{\Box} f_\Box$ for abbreviation. By Lemma \ref{lem-nr-dec}, for each narrow $K^2$-cube $B$, 
\begin{equation} \label{eq-dec}
\|\eit f\|_{L^p(B)} \lesssim  K^{\e^4} \left(\sum_{\Box}\left\|\eit f_\Box\right\|_{L^p(\omega_B)}^2\right)^{1/2} \,.
\end{equation}

\begin{figure}  [ht]
\centering
\includegraphics[scale=.3]{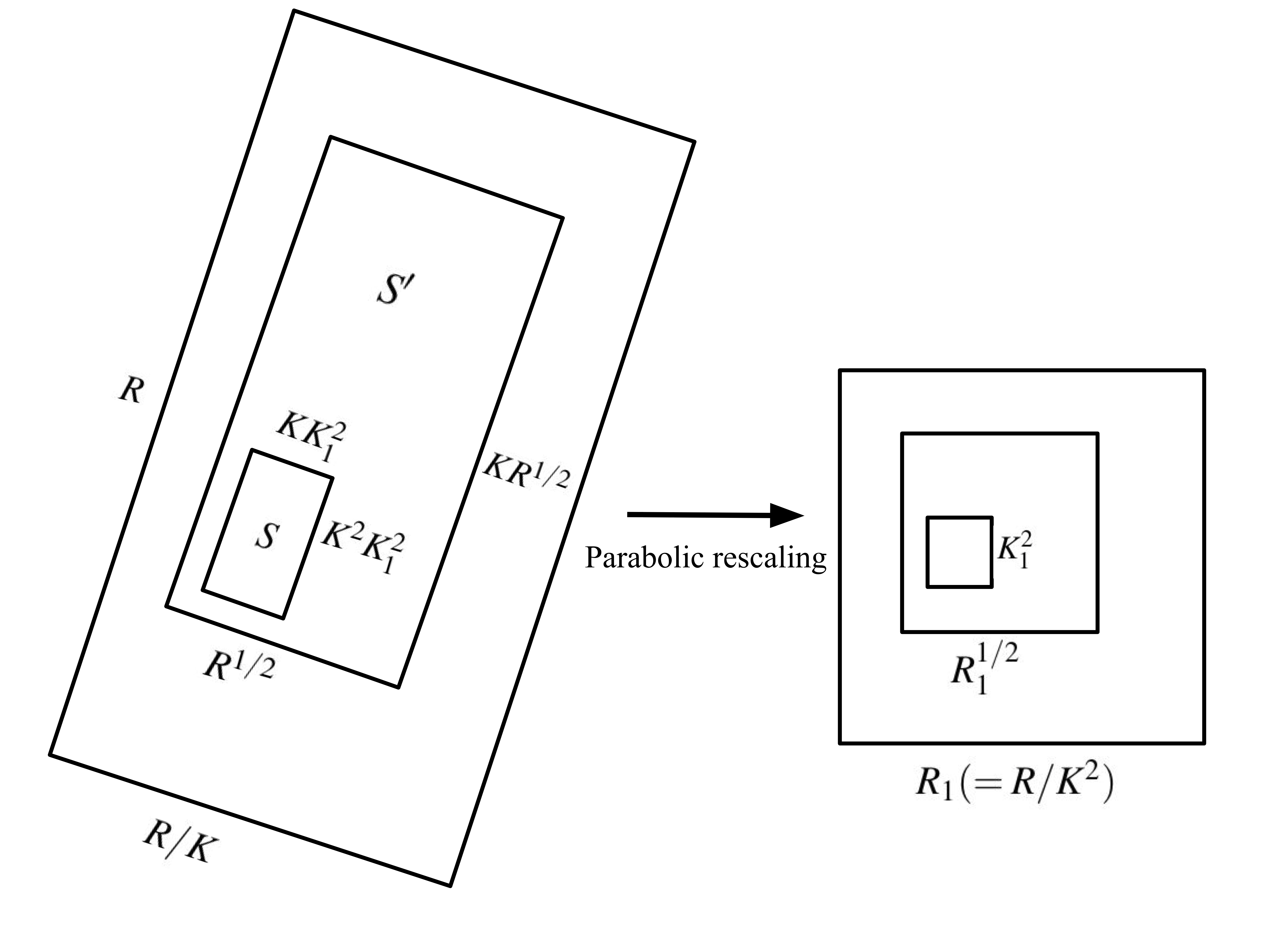}
\caption{\small{ Tubes of different scales in the $\Box$}}
\label{fig-stri}
\end{figure}

 We will have a gain $\frac{1}{K^{2\e}}$ from induction on radius. Therefore, in \eqref{eq-dec} we are allowed to lose a small power of $K$. This small power depends on $\e$ and should be smaller than $2\e$. It could be $\e^2,\e^3,\e^4,$ etc.

Next, we perform a dyadic pigeonholing to get our inductive hypothesis for each $f_{\Box}$. Recall that $K=R^{\delta}$, where $\delta=\e^{100}$. Denote
$$
R_1:=R/K^2=R^{1-2\delta}, \quad K_1 := R_1^\delta=R^{\delta-2\delta^2}\,.
$$
Tile $\Box$ by $KK_1^2\times \cdots \times KK_1^2 \times K^2K_1^2$-tubes $S$, and also tile $\Box$ by $R^{1/2}\times \cdots \times R^{1/2} \times KR^{1/2}$-tubes $S'$ (all running parallel to the long axis of $\Box$). 
To understand these scales, see Figure \ref{fig-stri} for the change in physical space \eqref{coord} during the process of parabolic rescaling. In particular, after rescaling the $\Box$ becomes an $R_1$-cube, the tubes $S'$ and $S$ become lattice $R_1^{1/2}$-cubes and $K_1^2$-cubes respectively.

We apply the following to regroup tubes $S$ and $S'$ inside each $\Box$:

\begin{enumerate}

\item Sort those tubes $S$ which intersect $Y$ according to the value $\|\eit f_\Box\|_{L^p(S)}$ and the number of narrow $K^2$-cubes contained in it. For dyadic numbers $\eta, \beta_1$, we use $\ZS_{\Box,\eta,\beta_1}$ to stand for the collection of tubes $S\subset \Box$ each of which containing $\sim \eta$ narrow $K^2$-cubes in $Y_{\rm narrow}$ and $\|\eit f_\Box\|_{L^p(S)} \sim \beta_1$. 

\item For fixed $\eta,\beta_1$, we sort the tubes $S'\subset \Box$ according to the number of tubes $S\in \ZS_{\Box,\eta,\beta_1}$ contained in it. For dyadic number $\lambda_1$, let $\ZS_{\Box,\eta,\beta_1,\lambda_1}$ be the sub-collection of $\ZS_{\Box,\eta,\beta_1}$ such that for each $S\in \ZS_{\Box, \eta,\beta_1,\lambda_1}$, the tube $S'$ containing $S$ contains $\sim \lambda_1$ tubes from $\ZS_{\Box,\eta, \beta_1}$.

\item For fixed $\eta, \beta_1,\lambda_1$, we sort the boxes $\Box$ according to the value $\|f_\Box\|_2$, the number $\#\ZS_{\Box, \eta, \beta_1, \lambda_1}$ and the value $\gamma_1$ defined below. For dyadic numbers $\beta_2, M_1,\gamma_1$, let $\ZB_{\eta,\beta_1,\lambda_1,\beta_2, M_1,\gamma_1}$ denote the collection of boxes $\Box$ each of which satisfying that $$\|f_\Box\|_2\sim \beta_2, \quad \#\ZS_{\Box,\eta,\beta_1,\lambda_1} \sim M_1$$
and
\begin{equation} \label{ga1}
\max_{T_{r}\subset \Box:r\geq K_1^2} \frac{\#\{S\in \ZS_{\Box,\eta,\beta_1,\lambda_1}: S\subset T_r\}}{r^\alpha} \sim \gamma_1\,,
\end{equation}
where $T_r$ are $Kr \times \cdots \times Kr \times K^2 r$-tubes in $\Box$ running parallel to the long axis of $\Box$.

\end{enumerate}

Let $Y_{\Box,\eta,\beta_1,\lambda_1}$ be the union of the tubes $S$ in $\ZS_{\Box,\eta,\beta_1,\lambda_1}$, and $\hichi_{Y_{\Box,\eta,\beta_1,\lambda_1}}$ the corresponding characteristic function. Then on $Y_{\rm narrow}$ we can write
$$
\eit f = \sum_{\eta,\beta_1,\lambda_1,\beta_2, M_1,\gamma_1}\left( \sum_{\Box \in \ZB_{\eta,\beta_1,\lambda_1,\beta_2, M_1,\gamma_1}} \eit f_{\Box} \cdot \hichi_{Y_{\Box,\eta,\beta_1,\lambda_1}} \right) + O(R^{-1000n}) \|f\|_2\,.
$$

The small error term $O(R^{-1000n}) \|f\|_2$ will prove to be harmless in our computations. We will neglect this term in the sequel.
Again, to make the statement really rigorous one needs to increase the side lengths of $\Box$ by a tiny power of $R$, say $R^{{\delta}^{100}} \sim K^{\delta^{99}}$. As before, we choose to ignore this technicality
in order to facilitate the main exposition.

In particular, on each narrow $B$ we have
\begin{equation} \label{fB}
\eit f = \sum_{\eta,\beta_1,\lambda_1,\beta_2, M_1,\gamma_1}\left( \sum_{\underset{B\subset Y_{\Box,\eta,\beta_1,\lambda_1}}{\Box \in \ZB_{\eta,\beta_1,\lambda_1,\beta_2, M_1,\gamma_1}}} \eit f_{\Box} \right)\,.
\end{equation}

Without loss of generality, we assume that $\|f\|_2=1$. Then we can further assume that the dyadic numbers above are in reasonable ranges, say
$$
1\leq \eta \leq K^{O(1)}, \quad R^{-C}\leq \beta_1 \leq K^{O(1)}, \quad 1\leq \lambda_1 \leq R^{O(1)}
$$
and
$$
R^{-C} \leq \beta_2 \leq 1, \quad 1\leq M_1\leq R^{O(1)},\quad K^{-2n}\leq \gamma_1\leq R^{O(1)}\,,
$$
where $C$ is a large constant such that the contributions from those $\beta_1$ and $\beta_2$ less than $R^{-C}$ are negligible. Therefore, there are only $O(\log R)$ significant choices for each dyadic number. Because of \eqref{eq-dec} and \eqref{fB}, by pigeonholing, we can choose $\eta,\beta_1,\lambda_1,\beta_2, M_1,\gamma_1$ so that 
\begin{equation} \label{dya1}
\|\eit f\|_{L^p(B)} \lesssim  (\log R)^{6} K^{\e^4} \left(\sum_{\underset{B\subset Y_{\Box,\eta,\beta_1,\lambda_1}}{\Box \in \ZB_{\eta,\beta_1,\lambda_1,\beta_2, M_1,\gamma_1}}}\|\eit f_\Box\|_{L^p(\omega_B)}^2\right)^{1/2}
\end{equation}
holds for a fraction $\gtrsim (\log R)^{-6}$ of all narrow $K^2$-cubes $B$.

We fix $\eta,\beta_1,\lambda_1,\beta_2, M_1,\gamma_1$ for the rest of the proof. Let $Y_\Box$ and $\ZB$ stand for the abbreviations of $Y_{\Box,\eta,\beta_1,\lambda_1}$ and $\ZB_{\eta,\beta_1,\lambda_1,\beta_2, M_1,\gamma_1}$ respectively. Finally we sort the narrow balls $B$ satisfying \eqref{dya1} by $
\#\{\Box\in \ZB: B\subset Y_{\Box}\}$. Let $Y'\subset Y_{\rm narrow}$ be a union of narrow $K^2$-cubes $B$ each of which obeying
\begin{equation} \label{dya1'}
\|\eit f\|_{L^p(B)} \lesssim (\log R)^6 K^{\e^4} \left(\sum_{\Box\in \ZB: B\subset Y_\Box}\|\eit f_\Box\|_{L^p(\omega_B)}^2\right)^{1/2}
\end{equation}
and
\begin{equation} \label{dya2}
\#\{\Box\in \ZB: B\subset Y_{\Box}\} \sim \mu
\end{equation}
for some dyadic number $1\leq \mu \leq K^{O(1)}$, moreover the number of $K^2$-cubes $B$ in $Y'$ is $\gtrsim$ $(\log R)^{-7} M$. 

Now we are done with dyadic pigeonholing argument and let us put all these together. By our assumption that $\|\eit f\|_{L^p(B_k)}$ is essentially constant in $k=1,2,\cdots,M$, in the narrow case we have
\begin{equation} \label{eq-Y'}
\|\eit f\|_{L^p(Y)}^p \lesssim (\log R)^7 \sum_{B\subset Y'} \|\eit f\|^p_{L^p(B)}\,.
\end{equation}
For each $B\subset Y'$, it follows from \eqref{dya1'}, \eqref{dya2} and H\"older's inequality that
\begin{equation} \label{eq-H}
\|\eit f\|^p_{L^p(B)} \lesssim  (\log R)^{6p} K^{\e^4p} \mu^{\frac{p}{2}-1} \sum_{\Box\in \ZB: B\subset Y_{\Box}}\|\eit f_\Box\|_{L^p(\omega_B)}^p\,.
\end{equation}
Putting \eqref{eq-Y'} and \eqref{eq-H} together and as before omiting the rapidly decaying tails,
\begin{equation} \label{Y-Ybox}
\|\eit f\|_{L^p(Y)} \lesssim (\log R)^{13} K^{\e^4} \mu^{\frac{1}{n+1}} \left(\sum_{\Box\in \ZB} \left\|\eit f_\Box\right\|^p_{L^p(Y_\Box)}\right)^{1/p}\,.
\end{equation}

Next, to each $\|\eit f_\Box\|_{L^p(Y_\Box)}$ we apply parabolic rescaling and induction on radius. For each $1/K$-cube $\tau=\tau_{\Box}$ in $B^n(0,1)$, we write $\xi=\xi_0+K^{-1} \zeta \in \tau$, where $\xi_0$ is the center of $\tau$. Then 
$$
|\eit f_{\Box} (x)| =K^{-n/2} |e^{i\tilde t \Delta} g (\tilde x)|
$$
for some function $g$ with Fourier support in the unit cube and $\|g\|_2=\|f_{\Box}\|_2$, where the new coordinates $(\tilde x,\tilde t)$ are related to the old coordinates $(x,t)$ by
\begin{equation} \label{coord}
\begin{cases}
   \tilde x =K^{-1} x + 2 t K^{-1} \xi_0\,, \\
   \tilde t = K^{-2} t \,.
\end{cases}
\end{equation}
For simplicity, denote the above relation by $(\tilde x,\tilde t)=F(x,t)$.
Therefore
\begin{equation} \label{Ybox-Ytilde}
\|\eit f_\Box (x)\|_{L^p(Y_\Box)} = K^{\frac{n+2}{p}-\frac{n}{2}} \|e^{i\tilde t \Delta} g(\tilde x)\|_{L^p(\tilde Y)}=
K^{-\frac{1}{n+1}} \|e^{i\tilde t \Delta} g(\tilde x)\|_{L^p(\tilde Y)},
\end{equation}
where $\tilde Y$ is the image of $Y_\Box$ under the new coordinates. 

Note that we can apply our inductive hypothesis \eqref{eq-main} at scale $R_1=R/K^2$ to $\|e^{i\tilde t \Delta} g(\tilde x)\|_{L^p(\tilde Y)}$ with new parameters $M_1, \gamma_1, \lambda_1, R_1$. More precisely, $\tilde Y = F(Y_{\Box})$ consists of $\sim M_1$ distinct $K_1^2$-cubes $F(S)$ in an $R_1$-ball $F(\Box)$, and the $K_1^2$-cubes $F(S)$ are organized into $R_1^{1/2}$-cubes $F(S')$ such that each cube $F(S')$ contains $\sim \lambda_1$ cubes $F(S)$. Moreover, $\|e^{i\tilde t \Delta} g(\tilde x)\|_{L^p(F(S))}$ is dyadically a constant in $S \subset Y_{\Box}$. By our choice of $\gamma_1$, we have
$$
\max_{ \underset{x'\in \ZR^{n+1},r\geq K_1^2}{B^{n+1}(x',r)\subset F(\Box)} }\frac{\#\{F(S) : F(S) \subset B(x',r)\}}{r^\alpha} \sim \gamma_1\,.
$$
Henceforth, by \eqref{Ybox-Ytilde} and inductive hypothesis \eqref{eq-main} at scale $R_1$ we have
\begin{equation} \label{ind}
\begin{split}
&\|\eit f_\Box (x)\|_{L^p(Y_\Box)} \\ \lesssim &K^{-\frac{1}{n+1}} M_1^{-\frac{1}{n+1}}\gamma_1^{\frac{2}{(n+1)(n+2)}} \lambda_1^{\frac{n}{(n+1)(n+2)}} \left(\frac{R}{K^2}\right)^{\frac{\alpha}{(n+1)(n+2)}+\e}\|f_\Box\|_2\,.
\end{split}
\end{equation}
From \eqref{Y-Ybox} and \eqref{ind} we obtain
\begin{equation} \label{all}
\begin{split}
&\|\eit f\|_{L^p(Y)}\\ \lesssim &K^{2\e^4} \mu^{\frac{1}{n+1}} 
K^{-\frac{1}{n+1}} M_1^{-\frac{1}{n+1}}\gamma_1^{\frac{2}{(n+1)(n+2)}} \lambda_1^{\frac{n}{(n+1)(n+2)}} \left(\frac{R}{K^2}\right)^{\frac{\alpha}{(n+1)(n+2)}+\e} \left(\sum_{\Box\in \ZB}\|f_\Box\|_2^p\right)^{1/p} \\
\lesssim & K^{2\e^4} \left(\frac{\mu}{\#\ZB} \right)^{\frac{1}{n+1}} 
K^{-\frac{1}{n+1}} M_1^{-\frac{1}{n+1}}\gamma_1^{\frac{2}{(n+1)(n+2)}} \lambda_1^{\frac{n}{(n+1)(n+2)}} \left(\frac{R}{K^2}\right)^{\frac{\alpha}{(n+1)(n+2)}+\e} \|f\|_2 \,,
\end{split}
\end{equation}
where the last inequality follows from orthogonality $\sum_\Box\|f_\Box\|_2^2 \lesssim \|f\|_2^2$ and the assumption that $\|f_\Box\|_2 \sim$ constant in $\Box\in \ZB$.

\begin{intu}To finish our inductive argument, we have to relate the old and new parameters. Our setup allows us to do this in a nice way: Given $M_1, \lambda_1$ and $\gamma_1$, if $\eta$ is small, i.e. each $S$ contains very few narrow $K^2$-cubes, then $M$ is relatively small; if $\eta$ is large, i.e. each $S$ contains a lot of narrow $K^2$-cubes, then $\lambda$ and $\gamma$ are relatively large. Both make the right-hand side of what we want to prove reasonably large. This is the reason why one could believe the numerology will work out.
\end{intu}

Consider the cardinality of the set 
$\{(\Box, B): \Box \in \ZB, B\subset Y_\Box\cap Y'\}$.
By our choice of $\mu$ as in \eqref{dya2}, there is a lower bound 
$$\#\{(\Box, B): \Box \in \ZB, B\subset Y_\Box\cap Y'\} \gtrsim (\log R)^{-7} M \mu\,.$$ 
On the other hand, by our choices of $M_1$ and $\eta$, for each $\Box\in \ZB$, $Y_\Box$ contains $\sim M_1$ tubes $S$ and each $S$ contains $\sim \eta$ narrow cubes in $Y_{\rm narrow}$, so
$$
\#\{(\Box, B): \Box \in \ZB, B\subset Y_\Box\cap Y'\} \lesssim (\#\ZB)M_1\eta\,.
$$
Therefore, we get
\begin{equation} \label{muB}
\frac{\mu}{\#\ZB} \lesssim \frac{(\log R)^7 M_1\eta}{M}\,.
\end{equation}

Next by our choices of $\gamma_1$ as in \eqref{ga1} and $\eta$, 
\begin{equation*}
\begin{split}
&\gamma_1 \cdot \eta\\
\sim &
\max_{T_{r}\subset \Box:r\geq K_1^2} \frac{\#\{S: S \subset Y_\Box\cap T_r\}}{r^\alpha} \cdot \#\{B: B \subset S\cap Y_{\rm narrow} \text{ for any fixed } S\subset Y_\Box\} \\
\lesssim & \max_{T_{r}\subset \Box:r\geq K_1^2} \frac{\#\{B\subset Y:B \subset T_r\}}{r^\alpha} \leq \frac{K\gamma (Kr)^\alpha}{r^\alpha}= \gamma K^{\alpha+1}\,,
\end{split}
\end{equation*}
where the last inequality follows from the definition \eqref{ga} of $\gamma$ and the fact that we can cover a $Kr \times \cdots \times Kr \times K^2 r$-tube $T_r$ by $\sim K$ finitely overlapping $Kr$-balls. Hence,
\begin{equation} \label{eta}
\eta \lesssim \frac{\gamma K^{\alpha+1}}{\gamma_1}\,.
\end{equation}

Finally we relate $\lambda_1$ and $\lambda$ by considering the number of narrow $K^2$-balls in each relevant $R^{1/2}\times \cdots \times R^{1/2} \times KR^{1/2}$-tube $S'$. Recall that each relevant $S'$ contains $\sim \lambda_1$ tubes $S$ in $Y_\Box$ and each such $S$ contains $\sim \eta$ narrow balls. On the other hand, we can cover $S'$ by $\sim K$ finitely overlapping $R^{1/2}$-balls and by assumption each $R^{1/2}$-ball contains $\lesssim \lambda$ many $K^2$-cubes in $Y$. Thus it follows that
\begin{equation}\label{la-la1}
\lambda_1\lesssim \frac{K\lambda}{\eta}\,.
\end{equation}

By inserting \eqref{muB} and \eqref{la-la1} into \eqref{all},
\begin{equation*}
\begin{split}
&\|\eit f\|_{L^p(Y)} \\
\lesssim &\frac{K^{3\e^4}}{K^{2\e}}\left(\frac{\eta\gamma_1}{K^{\alpha+1}}\right)^{\frac{2}{(n+1)(n+2)}} M^{-\frac{1}{n+1}} \lambda^{\frac{n}{(n+1)(n+2)}} R^{\frac{\alpha}{(n+1)(n+2)}+\e}\|f\|_2 \\
\lesssim & \frac{K^{3\e^4}}{K^{2\e}} M^{-\frac{1}{n+1}}\gamma^{\frac{2}{(n+1)(n+2)}} \lambda^{\frac{n}{(n+1)(n+2)}} R^{\frac{\alpha}{(n+1)(n+2)}+\e}\|f\|_2\,,
\end{split}
\end{equation*}
where the last inequality follows from \eqref{eta}. Since $K=R^{\delta}$ and $R$ can be assumed to be sufficiently large compared to any constant depending on $\e$, we have $\frac{K^{3\e^4}}{K^{2\e}} \ll 1$ and the induction closes for the narrow case. This completes the proof of Proposition \ref{thm-main}.

\subsection{Remark} \label{sec-rmk} 
In Section \ref{sec-app}, we have seen that Corollary \ref{cor-L2X} is a direct result of Theorem \ref{thm-L2X}, and they are equally useful in applications to the sharp $L^2$ estimate of Schr\"odinger maximal function. We can also prove Corollary \ref{cor-L2X} from scratch using a similar argument as in this section, which is slightly easier in two aspects compared to that of Theorem \ref{thm-L2X}. First, in the broad case, it is sufficient to use multilinear restriction estimates and not necessary to invoke the multilinear refined Strichartz. Secondly, because there is one parameter less, the dyadic pigeonholing argument in the narrow case would be slightly reduced, for example, see Figure \ref{fig-restr} for tubes of different scales in the $\Box$ under the setting of Corollary \ref{cor-L2X}. 

\begin{figure}[H]
\centering
\includegraphics[scale=.27]{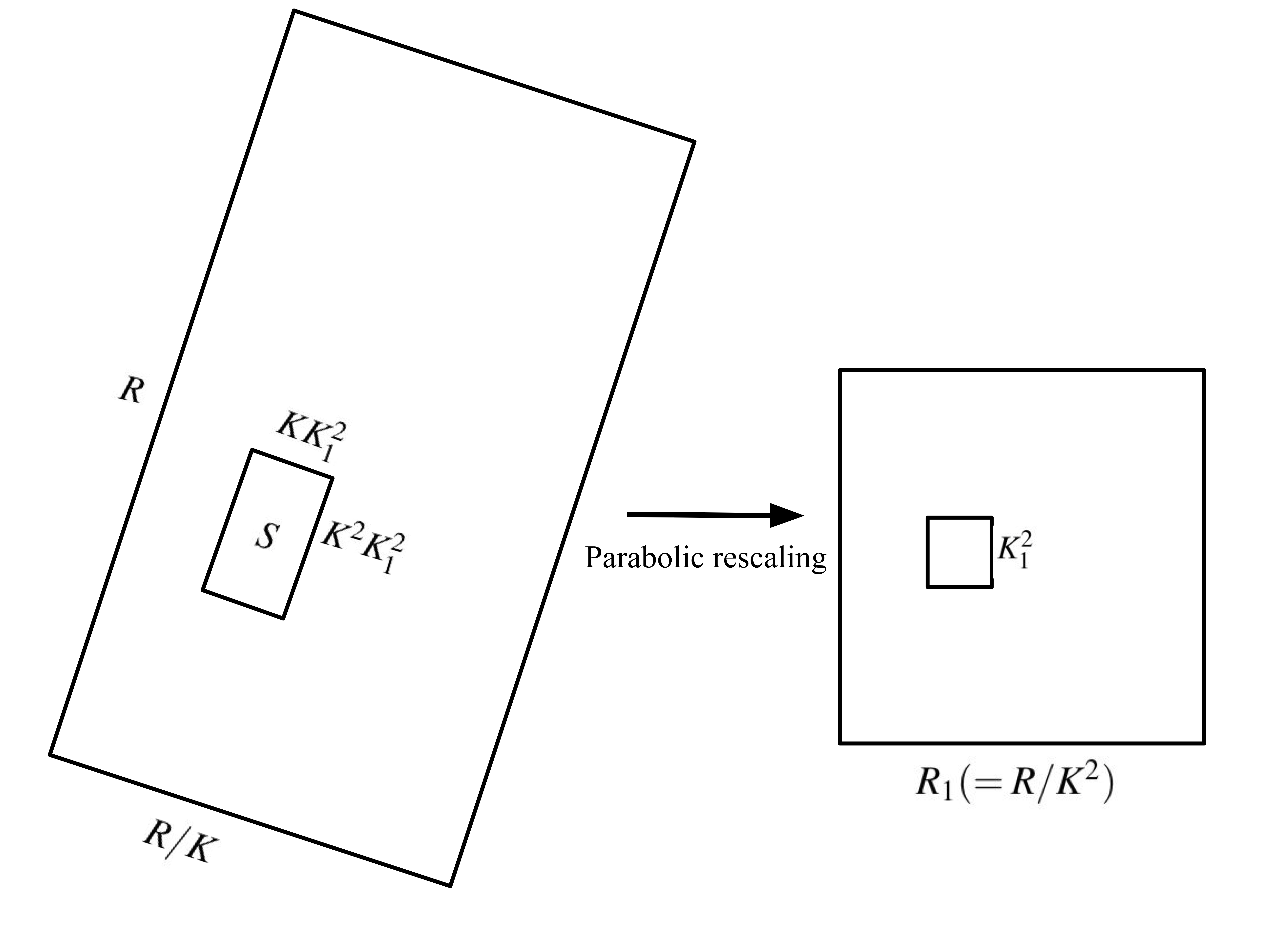}
\caption{\small{Tubes of different scales in the $\Box$} (in inductive argument for Corollary \ref{cor-L2X})}
\label{fig-restr}
\end{figure}

In fact, an adaptation of some arguments in the work \cite{W} of Wolff on the Falconer distance set problem in dimension $2$ can already imply Corollary \ref{cor-L2X} when $n=1$. In the special case $n=1$, the broad versus narrow dichotomy becomes the one on bilinear versus linear. To handle the linear part, the idea of induction on scales and splitting the ball into rectangular boxes ``$\Box$'' of size $R \times R/K$ in our proof already existed in Wolff's paper. We thank Hong Wang for pointing this out to us.

\end{document}